\title{Using tropical optimization techniques in bi-criteria decision problems}
\author{N. Krivulin\thanks{Faculty of Mathematics and Mechanics, Saint Petersburg State University, 28 Universitetsky Ave., St.~Petersburg, 198504, Russia, 
nkk@math.spbu.ru.}
\thanks{This work was supported in part by the Russian Foundation for Basic Research (grant No. 18-010-00723)}}
\date{}
\newtheorem{theorem}{Theorem}
\newtheorem{lemma}[theorem]{Lemma}
\theoremstyle{definition}
\newtheorem{example}{Example}
\begin{document}

\maketitle

\begin{abstract}
We consider decision problems of rating alternatives based on their pairwise comparisons according to two criteria. Given pairwise comparison matrices for each criterion, the problem is to find the overall scores of the alternatives. We offer a solution that involves the minimax approximation of the comparison matrices by a common consistent matrix of unit rank in terms of the Chebyshev metric in logarithmic scale. The approximation problem reduces to a bi-objective optimization problem to minimize the approximation errors simultaneously for both comparison matrices. We formulate the problem in terms of tropical (idempotent) mathematics, which focuses on the theory and applications of algebraic systems with idempotent addition. To solve the optimization problem obtained, we apply methods and results of tropical optimization to derive a complete Pareto-optimal solution in a direct explicit form ready for further analysis and straightforward computation. We then exploit this result to solve the bi-criteria decision problem of interest. As illustrations, we present examples of the solution of two-dimensional optimization problems in general form, and of a decision problem with four alternatives in numerical form.
\\

\textbf{Key-Words:} idempotent semifield, tropical optimization, pairwise comparison, bi-criteria decision problem, Pareto-optimal solution.
\\

\textbf{MSC (2010):} 90C29, 15A80, 90C48, 90B50, 90C47
\end{abstract}

\section{Introduction}

Tropical (idempotent) mathematics, which is an area concerned with the theory and applications of algebraic systems with idempotent addition, incorporates tropical optimization as an important research domain. Since the first studies in early 1960s, real-world optimization problems have often served to motivate and illustrate the developments in tropical mathematics. Tropical optimization problems are formulated and solved in the tropical mathematics setting, and appear in many recent works in the area, which include the monographs and textbooks by \cite{Golan2003Semirings,Heidergott2006Maxplus,Mceneaney2006Maxplus,Gondran2008Graphs,Butkovic2010Maxlinear,Maclagan2015Introduction}, and a great many contributed papers.

Applications of tropical optimization cover various problems in project scheduling, location analysis, decision making and in other fields. Some related examples can be found, e.g., in \cite{Krivulin2015Extremal,Krivulin2015Rating,Krivulin2016Using,Krivulin2017Direct,Krivulin2017Tropical,Krivulin2017Using,Krivulin2017Tropicaloptimization}. There are multidimensional tropical optimization problems that can be solved directly to describe all solutions in a compact closed vector form, whereas for other problems, only algorithmic solutions are available, which offer numerical iterative procedures to find a solution if one exists. For a brief overview of tropical optimization problems, one can see, e.g., \cite{Krivulin2014Tropical,Krivulin2015Multidimensional}. 

Multi-criteria decision problems, in which one needs to rate alternatives by evaluating their scores from the results of pairwise comparisons under several criteria, constitute a theoretically interesting and practically important class of problems in decision analysis (see, e.g., \cite{Saaty1990Analytic,Gavalec2015Decision}). The most common solution to the problems is based on the Analytical Hierarchy Process (AHP) method, developed in \cite{Saaty1977Scaling,Saaty1990Analytic,Saaty2013Onthemeasurement}, which involves calculating the principal eigenvectors of pairwise comparison matrices. Available solutions include the fuzzy AHP, interval AHP, and other techniques as in \cite{Laarhoven1983Fuzzy,Gavalec2015Decision,Kubler2016Stateoftheart,Ahn2017Analytic}.

In the context of tropical mathematics, the decision problems of rating alternatives are examined in \cite{Elsner2004Maxalgebra,Elsner2010Maxalgebra,Gursoy2013Analytic,Tran2013Pairwise}, which offer solutions that follow the AHP method with the tropical eigenvectors or subeigenvectors used instead of the conventional principal eigenvectors.

Another approach to the solution of the problems is proposed and developed in \cite{Krivulin2015Rating,Krivulin2016Using,Krivulin2017Tropicaloptimization}, which is based on the minimax log-Chebyshev approximation of pairwise comparison matrices. The approach involves the representation of the approximation problems in terms of tropical mathematics as tropical optimization problems, and the direct solution of these problems using methods and techniques of tropical optimization.

In this paper, we further develop the above approach to solve the problem of rating alternatives from pairwise comparisons under two equally weighted (unweighted) criteria. Given pairwise comparison matrices for each criterion, the problem is to find the overall scores of the alternatives. We offer a solution that involves the minimax approximation of the comparison matrices by a common consistent matrix of unit rank in terms of the Chebyshev metric in logarithmic scale. The approximation problem reduces to a bi-objective optimization problem to minimize the approximation errors simultaneously for both comparison matrices.

Furthermore, we formulate the problem in terms of tropical mathematics as a tropical optimization problem. To solve the optimization problem obtained, we apply methods and results of tropical optimization to derive a complete Pareto-optimal solution in a direct explicit form ready for further analysis and straightforward computation. We then exploit this result to solve the bi-criteria decision problem of interest. As illustrations, we present examples of the solution of two-dimensional optimization problems in general form, and of a decision problem with four alternatives in numerical form.  

The paper is organized as follows. We start in Section~\ref{S-OBODS} with a brief overview of one and bi-criteria decision problems under consideration and their representation as problems of the log-Chebyshev approximation of pairwise comparison matrices. In Section~\ref{S-ADPR}, we give an introduction to basic definitions and notation, and present some preliminary results of tropical algebra to provide an analytical framework for solving a bi-objective tropical optimization problem. Section~\ref{S-UBOOP} includes the main result, which offers a direct complete Pareto-optimal solution to the bi-objective problem. In Section~\ref{S-ETDP}, we illustrate the obtained result with examples of the complete solution of two-dimensional optimization problems in general form. Finally, in Section~\ref{S-ABCDS} we demonstrate application of the solution to a bi-criteria decision problem.

\section{One and bi-criteria decision problems}
\label{S-OBODS}

The method of pairwise comparisons finds wide application in decision making to estimate scores (rates, preferences) of alternatives (choices, decisions) when a direct rating of the alternatives is impossible or infeasible. The method uses the result of pairwise comparisons of alternatives with an appropriate scale under one or several criteria to evaluate the individual score of each alternative   (see, e.g., \cite{Thurstone1927Law,Saaty1977Scaling,Saaty1990Analytic} for further details and application examples).

\subsection{Rating by pairwise comparison}

Consider a problem to rate $n$ alternatives from a given pairwise comparison matrix $\bm{A}=(a_{ij})$, where the entry $a_{ij}$ shows the relative priority of alternative $i$ over $j$. The matrix is symmetrically reciprocal, which implies that the equality $a_{ij}=1/a_{ji}>0$ holds for all $i,j=1,\ldots,n$. A pairwise comparison matrix $\bm{A}$ is called consistent if its entries are transitive in the sense of the condition $a_{ij}=a_{ik}a_{kj}$, which must hold for all $i,j,k=1,\ldots,n$.

Furthermore, each consistent matrix $\bm{A}$ is of unit rank, and has entries $a_{ij}=x_{i}/x_{j}$ given by a positive vector $\bm{x}=(x_{i})$ that entirely specifies $\bm{A}$. If a pairwise comparison matrix $\bm{A}$ is consistent, its related vector $\bm{x}$ defines, up to a positive factor, the individual scores of alternatives. In case that the matrix $\bm{A}$ is inconsistent, as is usually the case in practice, an approximation problem arises to find an approximating consistent matrix $\bm{X}=(x_{ij})$ with $x_{ij}=x_{i}/x_{j}$, or, equivalently, the corresponding vector $\bm{x}=(x_{i})$.

The commonly used approach to the problem is based on the approximation in the spectral norm (see, e.g., \cite{Saaty1977Scaling,Saaty1990Analytic,Saaty2013Onthemeasurement}), which results in the principal (Perron) eigenvector method, where the principal eigenvector of the pairwise comparison matrix is taken as the vector of scores $\bm{x}$. Other solutions, proposed and examined in a range of works, including \cite{Saaty1984Comparison,Barzilai1997Deriving,Chu1998Ontheoptimal,Farkas2003Consistency,Gavalec2015Decision}, employ least squares and logarithmic least squares methods, Chebyshev approximation and some other techniques.

Another approach, which applies the best approximation of matrices in the Chebyshev sense on logarithmic scale, is proposed in \cite{Krivulin2015Rating,Krivulin2016Using}, where the minimax log-Chebyshev approximation is represented as a tropical optimization problem, which can be completely solved in an exact vector form.

\subsection{Minimax approximation in log-Chebyshev sense}

Consider the problem to approximate a pairwise comparison matrix $\bm{A}=(a_{ij})$ by a consistent matrix $\bm{X}=(x_{ij})$, where $a_{ij}=1/a_{ji}$ and $x_{ij}=x_{i}/x_{j}$ for all $i,j=1,\ldots,n$. Observing that the matrix $\bm{A}$ is assumed to be positive, we can measure the approximation error by the Chebyshev distance in logarithmic scale.

Let $\log$ denote a logarithmic function with a base greater than one. Since this function is monotone increasing, the log-Chebyshev distance between $\bm{A}$ and $\bm{X}$ to be minimized can be written as
\begin{equation*}
\max_{1\leq i,j\leq n}|\log a_{ij}-\log x_{ij}|
=
\log\max_{1\leq i,j\leq n}\max\left(\frac{a_{ij}}{x_{ij}},\frac{x_{ij}}{a_{ij}}\right).
\end{equation*}

Moreover, the monotonicity property makes the problem of minimizing the last logarithm equivalent to minimizing its argument. As a result, under the conditions $a_{ij}=1/a_{ji}$ and $x_{ij}=x_{i}/x_{j}$, the approximation problem reduces to minimizing
\begin{equation*}
\max_{1\leq i,j\leq n}\max\left(\frac{a_{ij}}{x_{ij}},\frac{x_{ij}}{a_{ij}}\right)
=
\max_{1\leq i,j\leq n}\max\left(\frac{a_{ij}x_{j}}{x_{i}},\frac{a_{ji}x_{i}}{x_{j}}\right)
=
\max_{1\leq i,j\leq n}\frac{a_{ij}x_{j}}{x_{i}},
\end{equation*}
where the functions on the right are minimized over all positive vectors $\bm{x}=(x_{i})$.

Note that, in approximating reciprocal matrices by consistent matrices of unit rank, minimizing the log-Chebyshev approximation error is equivalent to minimizing the relative error
\begin{equation*}
\max_{1\leq i,j\leq n}\frac{|a_{ij}-x_{i}/x_{j}|}{a_{ij}}.
\end{equation*}

To verify the equivalence (see also \cite{Elsner2004Maxalgebra}), we represent the relative error as
\begin{multline*}  
\max_{1\leq i,j\leq n}\frac{|a_{ij}-x_{i}/x_{j}|}{a_{ij}}
%=
%\max_{1\leq i,j\leq n}\left|\frac{x_{i}}{a_{ij}x_{j}}-1\right|
=
\max_{i<j}\max\left\{\left|\frac{x_{i}}{a_{ij}x_{j}}-1\right|,\left|\frac{a_{ij}x_{j}}{x_{i}}-1\right|\right\}
\\
=
\max_{i<j}\max\left\{\frac{x_{i}}{a_{ij}x_{j}}-1,\frac{a_{ij}x_{j}}{x_{i}}-1\right\}
%=
%\max_{i<j}\max\left\{\frac{x_{i}}{a_{ij}x_{j}},\frac{a_{ij}x_{j}}{x_{i}}\right\}-1
=
\max_{1\leq i,j\leq n}\frac{a_{ij}x_{j}}{x_{i}}
-
1.
\end{multline*}  

Since these error functions differ only by an additive constant, we conclude that both approximation problems with the log-Chebyshev and relative errors are equivalent to the problem of finding vectors $\bm{x}=(x_{i})$ that
\begin{equation*}
\begin{aligned}
&
\text{minimize}
&&
\max_{1\leq i,j\leq n}\frac{a_{ij}x_{j}}{x_{i}}.
\label{P-minmaxaijxjxi}
\end{aligned}
\end{equation*}

Complete solutions to this and related problems in the context of rating alternatives on the basis of pairwise comparisons, are given in \cite{Krivulin2015Rating,Krivulin2016Using,Krivulin2017Tropicaloptimization,Krivulin2018Methods}.

\subsection{Pairwise comparison under two criteria}

Suppose $n$ alternatives are compared in pairs under two equally weighted (unweighted) criteria, which results in two pairwise comparison matrices $\bm{A}=(a_{ij})$ and $\bm{B}=(b_{ij})$. The problem is formulated as a bi-criteria problem to find vectors $\bm{x}=(x_{i})$ such that the matrix $\bm{X}=(x_{i}/x_{j})$ simultaneously approximate both matrices $\bm{A}$ and $\bm{B}$. 

A solution to the problem can be obtained by applying the AHP method (see, e.g., \cite{Saaty1977Scaling,Saaty1990Analytic,Saaty2013Onthemeasurement}). In the weighted case, the AHP solution is based on separate approximation of each matrix $\bm{A}$ and $\bm{B}$ by consistent matrices using their principal eigenvectors. The vector of individual scores of alternatives is calculated as a weighted sum of normalized principal eigenvectors, where the weights (priorities) of the criteria can be found by the principal eigenvector method from a pairwise comparison matrix of criteria, or obtained in other ways. Under the assumption of equal weights, the weighted sum is reduced to the ordinary (unweighted) sum of normalized principal eigenvectors.

In the framework of the minimax log-Chebyshev approximation, the problem can be formulated as the bi-objective problem of finding positive vectors $\bm{x}=(x_{i})$ to
\begin{equation}
\begin{aligned}
&
\text{minimize}
&&
\left(
\max_{1\leq i,j\leq n}\frac{a_{ij}x_{j}}{x_{i}},\ 
\max_{1\leq i,j\leq n}\frac{b_{ij}x_{j}}{x_{i}}
\right).
\label{P-minmaxaijxjxi-maxbijxjxi}
\end{aligned}
\end{equation}

The common way to handle this problem, which has two competing objectives in conflict with each other, is to find a compromise solution that could not be improved. The set of solutions, where one objective can be improved only at the expense of the other objective, is usually considered the best compromise solution, which is called the nondominated or Pareto-optimal solution (see, e.g., \cite{Ehrgott2005Multicriteria,Luc2008Pareto,Pappalardo2008Multiobjective,Benson2009Multiobjective}).  

Note that problem \eqref{P-minmaxaijxjxi-maxbijxjxi} can be solved using the technique described in \cite{Krivulin2015Rating,Krivulin2016Using,Krivulin2017Tropicaloptimization}. The solution involves a matrix $\bm{C}=(c_{ij})$ with the entries $c_{ij}=\max\{a_{ij},b_{ij}\}$ to find vectors $\bm{x}=(x_{i})$ that
\begin{equation*}
\begin{aligned}
&
\text{minimize}
&&
\max_{1\leq i,j\leq n}\frac{c_{ij}x_{j}}{x_{i}}.
\end{aligned}
%\label{P-minmaxcijxjxi}
\end{equation*}

This approach presents an analogue of the AHP decision scheme, which applies the minimax log-Chebyshev approximation instead of the principal eigenvector method and the direct calculation of weighted sums. The solution is based on methods and techniques of tropical optimization, and offers the result in a compact vector form. However, this solution involves a scalarization of the bi-criteria problem, and hence can hardly provide a way to obtain all Pareto-optimal solutions.

Below, we further develop the tropical optimization approach to provide a direct, explicit representation for all Pareto-optimal solutions of problem \eqref{P-minmaxaijxjxi-maxbijxjxi}, which is given in a form ready for further analysis and straightforward computation.

\section{Algebraic definitions and preliminary results}
\label{S-ADPR}

We start with a brief overview of the algebraic definitions and preliminary results of tropical mathematics from \cite{Krivulin2015Extremal,Krivulin2015Multidimensional,Krivulin2017Direct,Krivulin2017Tropical}, which provide an analytical framework for the formulation and solution of the bi-objective tropical optimization problem to be considered in the next section. For further details at both basic and advanced levels, and for application examples, one can consult, e.g., the recent books by \cite{Golan2003Semirings,Heidergott2006Maxplus,Mceneaney2006Maxplus,Gondran2008Graphs,Butkovic2010Maxlinear,Maclagan2015Introduction}.

\subsection{Idempotent semifield}

Consider a nonempty set $\mathbb{X}$ equipped with addition $\oplus$ and multiplication $\otimes$ such that both operations are associative and commutative, addition is idempotent and has zero $\mathbb{0}$, whereas multiplication distributes over addition, has identity $\mathbb{1}$, and is invertible for all nonzero elements. The system $(\mathbb{X},\oplus,\otimes,\mathbb{0},\mathbb{1})$, which is an idempotent commutative semigroup with zero under addition and Abelian group under multiplication, is usually called the idempotent semifield.

Idempotent addition $\oplus$ conforms to the rule $x\oplus x=x$ for all $x\in\mathbb{X}$, and induces a partial order on $\mathbb{X}$ such that $x\leq y$ if and only if $x\oplus y=y$. This partial order is assumed extended to a compatible total order. Invertible multiplication $\otimes$ provides an inverse $x^{-1}$ for any $x\ne\mathbb{0}$ to satisfy the identity $xx^{-1}=\mathbb{1}$. (Here and henceforth, the multiplication symbol $\otimes$ is omitted to save writing.) The powers with integer exponents indicate iterated products, and are defined as $x^{p}=xx^{p-1}$, $x^{-p}=(x^{-1})^{p}$, and $x^{0}=\mathbb{1}$ for all nonzero $x\in\mathbb{X}$ and integer $p\geq1$. Moreover, the equation $x^{p}=a$ is assumed solvable with respect to $x$ for all $a\in\mathbb{X}$ and integer $p\geq1$, which  extends the power notation to rational exponents. 

The operations in the semifield have the following properties with respect to the order relation induced by the idempotent addition. First, the inequalities $x\leq x\oplus y$ and $y\leq x\oplus y$ hold. Furthermore, the inequality $x\oplus y\leq z$ is equivalent to the pair of inequalities $x\leq z$ and $y\leq z$. Both operations $\oplus$ and $\otimes$ are monotone in each argument, which implies that the inequality $x\leq y$ yields the inequalities $x\oplus z\leq y\oplus z$ and $xz\leq yz$ for all $z\in\mathbb{X}$. The inversion is antitone, which means that $x\leq y$ results in $x^{-1}\geq y^{-1}$ for all $x,y\ne\mathbb{0}$. Finally, the exponential inequalities $x^{q}\geq x^{r}$ if $x\leq\mathbb{1}$ and $x^{q}\leq x^{r}$ if $x\geq\mathbb{1}$ are valid under the condition $q\leq r$, where $q$ and $r$ are positive rationals.

As examples of the idempotent semifield under study, consider real semifields $\mathbb{R}_{\max,+}=(\mathbb{R}\cup\{-\infty\},\max,+,-\infty,0)$ and $\mathbb{R}_{\max,\times}=(\mathbb{R}_{+},\max,\times,0,1)$, where $\mathbb{R}$ is the set of reals, and $\mathbb{R}_{+}=\{x\geq0|\ x\in\mathbb{R}\}$. In the semifield $\mathbb{R}_{\max,+}$, which is typically called the max-plus algebra, addition $\oplus$ is defined as $\max$ and multiplication $\otimes$ as $+$. The zero $\mathbb{0}$ and identity $\mathbb{1}$ are given by $-\infty$ and $0$. The inverse $x^{-1}$ coincides with the opposite number $-x$ in the conventional arithmetic. The power $x^{y}$ corresponds to the arithmetic product $yx$, and is well-defined for any $x,y\in\mathbb{R}$. The partial order induced by the idempotent addition is compatible with the natural linear order on $\mathbb{R}$. 

The semifield $\mathbb{R}_{\max,\times}$ is commonly referred to as the max-algebra, and has the operations defined as $\oplus=\max$ and $\otimes=\times$, and neutral elements as $\mathbb{0}=0$ and $\mathbb{1}=1$. The inverse and power notations have the standard meaning, and the partial order extends to the natural linear order. The max-algebra will serve below as the basis for the application of tropical optimization to the bi-criteria decision problem of interest.

\subsection{Matrices and vectors}

We now consider matrices with entries in $\mathbb{X}$, and denote the set of the matrices with $m$ rows and $n$ columns by $\mathbb{X}^{m\times n}$. Idempotent algebra of matrices over $\mathbb{X}$ is routinely defined, where the matrix operations follow the standard rules with the scalar operations $\oplus$ and $\otimes$ in place of the ordinary arithmetic addition and multiplication. Specifically, for any matrices $\bm{A}=(a_{ij})\in\mathbb{X}^{m\times n}$, $\bm{B}=(b_{ij})\in\mathbb{X}^{m\times n}$ and $\bm{C}=(c_{ij})\in\mathbb{X}^{n\times l}$, and a scalar $x\in\mathbb{X}$, matrix addition and multiplication, and scalar multiplication are given by the entry-wise formulas
\begin{equation*}
\{\bm{A}\oplus\bm{B}\}_{ij}
=
a_{ij}\oplus b_{ij},
\qquad
\{\bm{A}\bm{C}\}_{ij}
=
\bigoplus_{k=1}^{n}a_{ik}c_{kj},
\qquad
\{x\bm{A}\}_{ij}
=
xa_{ij}.
\end{equation*}

The properties of the scalar operations $\oplus$ and $\otimes$ with respect to the order relation extend to the matrix operations, where the inequalities are understood entry-wise. A matrix that has all entries equal to $\mathbb{0}$ is the zero matrix denoted by $\bm{0}$. A matrix with at least one nonzero entry in each column is called column-regular.

A square matrix with all diagonal entries equal to $\mathbb{1}$ and the off-diagonal entries to $\mathbb{0}$ is the identity matrix denoted by $\bm{I}$. The power notation serves to indicate repeated multiplication of a matrix with itself, defined as $\bm{A}^{0}=\bm{I}$ and $\bm{A}^{p}=\bm{A}\bm{A}^{p-1}$ for any square matrix $\bm{A}$ and integer $p\geq1$.

Consider a square matrix $\bm{A}=(a_{ij})\in\mathbb{X}^{n\times n}$. The trace of $\bm{A}$ is given by
\begin{equation*}
\mathop\mathrm{tr}\bm{A}
=
a_{11}\oplus\cdots\oplus a_{nn}
=
\bigoplus_{k=1}^{n}a_{kk}.
\end{equation*} 

For any matrices $\bm{A},\bm{B}\in\mathbb{X}^{n\times n}$ and scalar $x\in\mathbb{X}$, the following identities hold:
\begin{equation*}
\mathop\mathrm{tr}(\bm{A}\oplus\bm{B})
=
\mathop\mathrm{tr}\bm{A}
\oplus
\mathop\mathrm{tr}\bm{B},
\qquad
\mathop\mathrm{tr}(\bm{A}\bm{B})
=
\mathop\mathrm{tr}(\bm{B}\bm{A}),
\qquad
\mathop\mathrm{tr}(x\bm{A})
=
x\mathop\mathrm{tr}\bm{A}.
\end{equation*}

To describe the solution to optimization problems in the sequel, we need to define the function, which assigns to the matrix $\bm{A}$ the scalar
\begin{equation}
\mathop\mathrm{Tr}(\bm{A})
=
\mathop\mathrm{tr}\bm{A}
\oplus\cdots\oplus
\mathop\mathrm{tr}\bm{A}^{n}
=
\bigoplus_{k=1}^{n}\mathop\mathrm{tr}\bm{A}^{k}.
\label{E-TrA-trAtrAn}
\end{equation}

Provided that the condition $\mathop\mathrm{Tr}(\bm{A})\leq\mathbb{1}$ holds, we apply the asterate operator, also known as the Kleene star, which yields the matrix
\begin{equation*}
\bm{A}^{\ast}
=
I\oplus\bm{A}\oplus\cdots\oplus\bm{A}^{n-1}
=
\bigoplus_{k=0}^{n-1}\bm{A}^{k}
\end{equation*}

Finally, we use the the spectral radius of the matrix $\bm{A}$, which is given by
\begin{equation}
\lambda
%=
%\bigoplus_{k=1}^{n}\bigoplus_{1\leq i_{1},\ldots,i_{k}\leq n}(a_{i_{1}i_{2}}a_{i_{2}i_{3}}\cdots a_{i_{k}i_{1}})^{1/k}
=
\mathop\mathrm{tr}\bm{A}
\oplus\cdots\oplus
\mathop\mathrm{tr}\nolimits^{1/n}(\bm{A}^{n})
=
\bigoplus_{k=1}^{n}\mathop\mathrm{tr}\nolimits^{1/k}(\bm{A}^{k}).
\label{E-lambda-ai1i2ai2i3aiki1}
\end{equation}

A matrix with one column or row forms a vector over $\mathbb{X}$. The vectors are considered as column vectors unless otherwise specified. The set of column vectors with $n$ elements is denoted by $\mathbb{X}^{n}$. A vector with all entries equal to $\mathbb{0}$ is the zero vector denoted by $\bm{0}$. A vector is called regular if it has no zero elements.

For any regular vector $\bm{x}=(x_{i})$, the multiplicative conjugate transposition yields the row vector $\bm{x}^{-}=(x_{i}^{-})$ with the elements $x_{i}^{-}=x_{i}^{-1}$ if $x_{i}\ne\mathbb{0}$, and $x_{i}^{-}=\mathbb{0}$ otherwise.

\subsection{Vector inequalities}

Given a matrix $\bm{A}\in\mathbb{X}^{m\times n}$ and a vector $\bm{d}\in\mathbb{X}^{m}$, consider the problem to solve, with respect to the unknown vector $\bm{x}\in\mathbb{X}^{n}$, the inequality
\begin{equation}
\bm{A}\bm{x}
\leq
\bm{d}.
\label{I-Axleqd}
\end{equation}

A direct solution to the problem is described as follows (see, e.g., \cite{Krivulin2015Extremal}).
\begin{lemma}
\label{L-Axleqd}
For any column-regular matrix $\bm{A}$ and regular vector $\bm{d}$, all solutions to inequality \eqref{I-Axleqd} are given by
\begin{equation*}
\bm{x}
\leq
(\bm{d}^{-}\bm{A})^{-}.
\label{I-xd-A}
\end{equation*}
\end{lemma}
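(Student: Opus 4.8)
The plan is to reduce the matrix inequality \eqref{I-Axleqd} to a family of scalar inequalities and then characterize each coordinate of $\bm{x}$ separately. Writing \eqref{I-Axleqd} one row at a time, the $i$-th row reads $\bigoplus_{j=1}^{n} a_{ij} x_{j} \leq d_{i}$. Since the relation $x \oplus y \leq z$ is equivalent to the pair $x \leq z$ and $y \leq z$, this single row inequality is equivalent to the collection of scalar inequalities $a_{ij} x_{j} \leq d_{i}$ taken over all $j$. Ranging over the rows $i = 1, \ldots, m$ as well, the whole matrix inequality is thus equivalent to requiring $a_{ij} x_{j} \leq d_{i}$ for every pair $(i,j)$.

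Next I would isolate $x_{j}$, keeping everything in product form to avoid needing scalar inverses of the unknown. Multiplying each inequality $a_{ij} x_{j} \leq d_{i}$ by $d_{i}^{-1}$, which exists because $\bm{d}$ is regular, and using monotonicity of multiplication, gives the equivalent form $d_{i}^{-1} a_{ij} x_{j} \leq \mathbb{1}$. Fixing $j$ and imposing all of these simultaneously over $i$, the sum-bound equivalence together with distributivity collapses them into the single inequality $\left(\bigoplus_{i=1}^{m} d_{i}^{-1} a_{ij}\right) x_{j} \leq \mathbb{1}$. Here any zero entry $a_{ij} = \mathbb{0}$ contributes nothing and imposes no constraint, while column-regularity of $\bm{A}$ ensures the coefficient $\bigoplus_{i} d_{i}^{-1} a_{ij}$ is nonzero for every $j$, so it may be inverted to yield $x_{j} \leq \left(\bigoplus_{i} d_{i}^{-1} a_{ij}\right)^{-1}$. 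Recognizing $\bigoplus_{i} d_{i}^{-1} a_{ij}$ as the $j$-th entry of the row vector $\bm{d}^{-}\bm{A}$, its inverse is exactly the $j$-th entry of $(\bm{d}^{-}\bm{A})^{-}$, which gives the stated bound $\bm{x} \leq (\bm{d}^{-}\bm{A})^{-}$.

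The key feature I would stress is that every step in this chain, from the row decomposition through the final division by the nonzero coefficient, is a logical equivalence rather than a one-sided implication; in particular the step $c\,x_{j} \leq \mathbb{1} \Leftrightarrow x_{j} \leq c^{-1}$ for nonzero $c$ holds in both directions by monotonicity. Consequently the derivation does not merely show that every solution obeys the bound, but identifies the solution set of \eqref{I-Axleqd} \emph{exactly} with $\{\bm{x} : \bm{x} \leq (\bm{d}^{-}\bm{A})^{-}\}$, which is precisely the assertion that all solutions are given by the stated inequality. As an independent sanity check, one may verify directly that the extremal vector is a solution, i.e. $\bm{A}(\bm{d}^{-}\bm{A})^{-} \leq \bm{d}$: the $i$-th entry of the left-hand side has summands $a_{ij}\left(\bigoplus_{k} d_{k}^{-1} a_{kj}\right)^{-1}$, and since the inner sum dominates its own $k=i$ term $d_{i}^{-1} a_{ij}$, antitonicity of inversion bounds each summand by $d_{i}$.

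I do not expect a serious obstacle in this lemma, as the manipulations are routine; the only point demanding care, and the place where the hypotheses are genuinely used, is the bookkeeping around the zero entries of $\bm{A}$, where scalar inverses are undefined. The column-regularity assumption is exactly what guarantees that in each column at least one product $d_{i}^{-1} a_{ij}$ is nonzero, so that the coefficient governing the $j$-th coordinate is invertible and the conjugate $(\bm{d}^{-}\bm{A})^{-}$ is well defined componentwise; the regularity of $\bm{d}$ plays the analogous role in forming $\bm{d}^{-}$. I would therefore flag explicitly where each of these two hypotheses enters.
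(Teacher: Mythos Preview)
Your argument is correct and is the standard elementary proof of this result. Note, however, that the paper does not actually prove Lemma~\ref{L-Axleqd}: it is stated without proof and attributed to an earlier reference (Krivulin 2015), so there is no ``paper's own proof'' to compare against. Your componentwise reduction via the equivalence $\bigoplus_{j} a_{ij}x_{j}\leq d_{i}\Leftrightarrow a_{ij}x_{j}\leq d_{i}$ for all $j$, followed by inversion of the nonzero column sums $\bigoplus_{i} d_{i}^{-1}a_{ij}$, is precisely the argument one finds in the cited source, and your identification of where regularity of $\bm{d}$ and column-regularity of $\bm{A}$ enter is accurate.
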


Now suppose that, given a matrix $\bm{A}\in\mathbb{X}^{n\times n}$, the problem is to find regular vectors $\bm{x}\in\mathbb{X}^{n}$ to satisfy the inequality
\begin{equation}
\bm{A}\bm{x}
\leq
\bm{x}.
\label{I-Axleqx}
\end{equation}

The next result, obtained in \cite{Krivulin2015Multidimensional}, offers a direct solution.
\begin{theorem}
\label{T-Axleqx}
For any matrix $\bm{A}$, the following statements hold:
\begin{enumerate}
\item If $\mathop\mathrm{Tr}(\bm{A})\leq\mathbb{1}$, then all regular solutions to \eqref{I-Axleqx} are given by $\bm{x}=\bm{A}^{\ast}\bm{u}$, where $\bm{u}$ is any regular vector.
\item If $\mathop\mathrm{Tr}(\bm{A})>\mathbb{1}$, then there is only the trivial solution $\bm{x}=\bm{0}$.
\end{enumerate}
\end{theorem}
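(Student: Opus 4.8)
The plan is to treat the two statements separately, in each case converting the single matrix inequality $\bm{A}\bm{x}\leq\bm{x}$ into a family of consequences obtained by repeated multiplication by $\bm{A}$, and then linking the hypothesis on $\mathop\mathrm{Tr}(\bm{A})$ to the diagonal entries of the powers $\bm{A}^{k}$.

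For the first statement I would begin with the forward inclusion. Starting from $\bm{A}\bm{x}\leq\bm{x}$ and using monotonicity of matrix multiplication, I would multiply by $\bm{A}$ repeatedly to get $\bm{A}^{k}\bm{x}\leq\bm{x}$ for every $k\geq1$ by a one-line induction. Taking the idempotent sum of these inequalities for $k=0,1,\ldots,n-1$ (the case $k=0$ being the trivial $\bm{x}\leq\bm{x}$) and invoking the definition $\bm{A}^{\ast}=\bigoplus_{k=0}^{n-1}\bm{A}^{k}$ yields $\bm{A}^{\ast}\bm{x}\leq\bm{x}$. Since $\bm{A}^{\ast}\geq\bm{I}$, the reverse inequality $\bm{x}\leq\bm{A}^{\ast}\bm{x}$ holds automatically, so $\bm{x}=\bm{A}^{\ast}\bm{x}$, exhibiting every regular solution in the stated form with $\bm{u}=\bm{x}$. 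For the converse I would substitute $\bm{x}=\bm{A}^{\ast}\bm{u}$ and verify $\bm{A}\bm{A}^{\ast}\bm{u}\leq\bm{A}^{\ast}\bm{u}$; since $\bm{A}\bm{A}^{\ast}=\bigoplus_{k=1}^{n}\bm{A}^{k}$ and all lower powers already sit inside $\bm{A}^{\ast}$, this reduces to proving $\bm{A}^{n}\leq\bm{A}^{\ast}$.

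This bound is the crux and the step I expect to be the main obstacle, since it is exactly where $\mathop\mathrm{Tr}(\bm{A})\leq\mathbb{1}$ must enter. I would first observe that $\mathop\mathrm{Tr}(\bm{A})\leq\mathbb{1}$ is equivalent to $\lambda\leq\mathbb{1}$ for the spectral radius in \eqref{E-lambda-ai1i2ai2i3aiki1}, because raising to the power $1/k$ preserves the comparison with $\mathbb{1}$. I would then read the entry $(\bm{A}^{k})_{ij}$ as the largest weight of a path of length $k$ from $i$ to $j$ in the associated weighted graph, the weight being the product of its edge entries. Any path of length $k\geq n$ repeats a vertex and hence contains a cycle; since every cycle weight is bounded by $\lambda$ raised to the cycle length, and $\lambda\leq\mathbb{1}$ forces such factors to be at most $\mathbb{1}$, deleting the cycle cannot decrease the weight. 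Removing cycles until the length drops below $n$ gives $(\bm{A}^{k})_{ij}\leq(\bm{A}^{\ell})_{ij}$ for some $\ell\leq n-1$, so $\bm{A}^{k}\leq\bm{A}^{\ast}$ for all $k$; in particular $\bm{A}^{n}\leq\bm{A}^{\ast}$, which closes the converse.

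For the second statement I would argue by contradiction through the diagonal entries. If $\mathop\mathrm{Tr}(\bm{A})>\mathbb{1}$, then $\mathop\mathrm{tr}\bm{A}^{k}>\mathbb{1}$ for some $k$, so some diagonal entry satisfies $(\bm{A}^{k})_{ii}>\mathbb{1}$. Suppose $\bm{x}$ were a regular solution; then $\bm{A}^{k}\bm{x}\leq\bm{x}$ as before, and its $i$-th component gives $x_{i}\geq(\bm{A}^{k}\bm{x})_{i}\geq(\bm{A}^{k})_{ii}x_{i}>x_{i}$, where the strict inequality uses that $x_{i}\ne\mathbb{0}$ is invertible and $(\bm{A}^{k})_{ii}>\mathbb{1}$. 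This contradiction shows that no regular solution exists, leaving only the trivial vector $\bm{x}=\bm{0}$; a genuinely nontrivial non-regular solution can be excluded by the same component estimate after propagating the zero entries around the critical cycle.
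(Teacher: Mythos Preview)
The paper does not actually prove this theorem; it is quoted from \cite{Krivulin2015Multidimensional} as a preliminary result and stated without proof, so there is nothing in the paper to compare your argument against directly.

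On its own merits your argument is essentially correct. The forward inclusion in part~1 and the cycle-deletion proof of $\bm{A}^{n}\leq\bm{A}^{\ast}$ for the converse are the standard route, and you have located correctly the point at which the hypothesis $\mathop\mathrm{Tr}(\bm{A})\leq\mathbb{1}$ (equivalently, all simple-cycle weights $\leq\mathbb{1}$) is used. The diagonal-entry contradiction in part~2 cleanly rules out regular solutions.

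The one genuine gap is your final sentence. The claim that a nontrivial non-regular solution can always be excluded ``by propagating the zero entries around the critical cycle'' is false for a general matrix $\bm{A}$. In $\mathbb{R}_{\max,\times}$, take
\begin{equation*}
\bm{A}
=
\begin{pmatrix}
2 & 0\\
0 & 0
\end{pmatrix};
\end{equation*}
then $\mathop\mathrm{Tr}(\bm{A})=4>\mathbb{1}$, yet $\bm{x}=(0,1)^{T}$ is a nonzero solution of $\bm{A}\bm{x}\leq\bm{x}$. The difficulty is that the index $i$ with $(\bm{A}^{k})_{ii}>\mathbb{1}$ may lie entirely in the zero support of $\bm{x}$, so no contradiction is reached. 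In the paper's setting the problem is explicitly posed for \emph{regular} vectors, so part~2 should be read as ``no regular solution exists''; your argument already proves that, and the last sentence should simply be dropped rather than patched.
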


\subsection{Identities and inequalities for traces}

We start with an obvious binomial identity that is valid for any square matrices $\bm{A},\bm{B}\in\mathbb{X}^{n\times n}$ and positive integer $m$ in the following form (see also \cite{Krivulin2017Direct}):
\begin{equation*}
(\bm{A}\oplus\bm{B})^{m}
=
\bm{A}^{m}
\oplus
\bigoplus_{k=1}^{m}\bigoplus_{i_{0}+i_{1}+\cdots+i_{k}=m-k}
\bm{A}^{i_{0}}(\bm{B}\bm{A}^{i_{1}}\cdots\bm{B}\bm{A}^{i_{k}}).
\end{equation*}

Taking trace of both sides and using properties of traces yield
\begin{equation*}
\mathop\mathrm{tr}(\bm{A}\oplus\bm{B})^{m}
=
\mathop\mathrm{tr}\bm{A}^{m}
\oplus
\bigoplus_{k=1}^{m}\bigoplus_{i_{1}+\cdots+i_{k}=m-k}
\mathop\mathrm{tr}(\bm{B}\bm{A}^{i_{1}}\cdots\bm{B}\bm{A}^{i_{k}}).
\end{equation*} 

After summing over $m=1,\ldots,n$, and rearranging terms, we obtain the identity
\begin{equation}
\mathop\mathrm{Tr}(\bm{A}\oplus\bm{B})
=
\bigoplus_{k=1}^{n}\mathop\mathrm{tr}\bm{A}^{k}
\oplus
%\bigoplus_{k=1}^{n-1}
%\bigoplus_{m=1}^{n-k}
%\bigoplus_{i_{1}+\cdots+i_{k}=m}
%\mathop\mathrm{tr}(\bm{B}\bm{A}^{i_{1}}\cdots\bm{B}\bm{A}^{i_{k}})
%\oplus
%\bigoplus_{k=1}^{n}\mathop\mathrm{tr}\bm{B}^{k}.
\bigoplus_{k=1}^{n}
\bigoplus_{m=0}^{n-k}
\bigoplus_{i_{1}+\cdots+i_{k}=m}
\mathop\mathrm{tr}(\bm{B}\bm{A}^{i_{1}}\cdots\bm{B}\bm{A}^{i_{k}}).
\label{E-TrAB}
\end{equation} 

Furthermore, we assume $s,t>\mathbb{0}$, and introduce two functions
\begin{equation}
\begin{aligned}
G(s)
&=
\bigoplus_{k=1}^{n-1}
\bigoplus_{m=1}^{n-k}
\bigoplus_{i_{1}+\cdots+i_{k}=m}
s^{-k/m}\mathop\mathrm{tr}\nolimits^{1/m}(\bm{B}\bm{A}^{i_{1}}\cdots\bm{B}\bm{A}^{i_{k}}),
\\
H(t)
&=
\bigoplus_{k=1}^{n-1}
\bigoplus_{m=1}^{n-k}
\bigoplus_{i_{1}+\cdots+i_{k}=m}
t^{-m/k}\mathop\mathrm{tr}\nolimits^{1/k}(\bm{B}\bm{A}^{i_{1}}\cdots\bm{B}\bm{A}^{i_{k}}).
\end{aligned}
\label{E-Gs-Ht}
\end{equation}

The functions $G$ and $H$ possess a duality property that holds for the pair of inequalities $G(s)\leq t$ and $H(t)\leq s$, which appear to be equivalent. To verify this property, suppose that, for some $s$ and $t$, the first inequality $G(s)\leq t$ is valid. This inequality is equivalent to the system of inequalities
\begin{equation*}
\bigoplus_{i_{1}+\cdots+i_{k}=m}
s^{-k/m}\mathop\mathrm{tr}\nolimits^{1/m}(\bm{B}\bm{A}^{i_{1}}\cdots\bm{B}\bm{A}^{i_{k}})
\leq
t,
\quad
m=1,\ldots,n-k;
\quad
k=1,\ldots,n-1.
\end{equation*}

Multiplication of both sides of the inequalities by $t^{-1}s^{k/m}$, followed by raising to the power $m/k$, leads to the system
\begin{equation*}
\bigoplus_{i_{1}+\cdots+i_{k}=m}
t^{-m/k}\mathop\mathrm{tr}\nolimits^{1/k}(\bm{B}\bm{A}^{i_{1}}\cdots\bm{B}\bm{A}^{i_{k}})
\leq
s,
\quad
m=1,\ldots,n-k;
\quad
k=1,\ldots,n-1.
\end{equation*}

By summing up the inequalities in the system, we obtain the second inequality
\begin{equation*}
H(t)
=
\bigoplus_{k=1}^{n-1}
\bigoplus_{m=1}^{n-k}
\bigoplus_{i_{1}+\cdots+i_{k}=m}
t^{-m/k}\mathop\mathrm{tr}\nolimits^{1/k}(\bm{B}\bm{A}^{i_{1}}\cdots\bm{B}\bm{A}^{i_{k}})
\leq
s.
\end{equation*}

Observing that all transformations performed are invertible, we conclude that both inequalities $G(s)\leq t$ and $H(t)\leq s$ are equivalent.

Let us discuss the computational complexity of the functions $G$ and $H$. Consider the function $G$, and rewrite it in equivalent form as
\begin{equation*}
G(s)
=
\bigoplus_{k=1}^{n-1}
\bigoplus_{m=1}^{n-k}
s^{-k/m}
\mathop\mathrm{tr}\nolimits^{1/m}(\bm{R}_{km}),
\qquad
\bm{R}_{km}
=
\bigoplus_{i_{1}+\cdots+i_{k}=m}
\bm{B}\bm{A}^{i_{1}}\cdots\bm{B}\bm{A}^{i_{k}},
\end{equation*}
which shows that the computational time required to calculate $G$ is determined by the time taken to calculate the matrices $\bm{R}_{km}$ for all $k=1,\ldots,n$ and $m=1,\ldots,n-k$.

Note that $\bm{R}_{km}$ is defined as the sum of products $\bm{B}\bm{A}^{i_{1}}\cdots\bm{B}\bm{A}^{i_{k}}$ over all nonnegative integers $i_{1},\ldots,i_{k}$ such that $i_{1}+\cdots+i_{k}=m$. Since the recurrence equation
\begin{equation*}
\bm{R}_{km}
=
\bm{B}\bm{R}_{k-1,m}\oplus\bm{R}_{k,m-1}\bm{A},
\qquad
\bm{R}_{0m}
=
\bm{A}^{m},
\qquad
\bm{R}_{k0}
=
\bm{B}^{k},
\qquad
\bm{R}_{00}
=
\bm{I},
\end{equation*}
is valid for all $k,m\geq1$, each matrix $\bm{R}_{km}$ can be obtained from the matrices $\bm{R}_{k-1,m}$ and $\bm{R}_{k,m-1}$ through two matrix multiplications and one matrix addition. Observing that one matrix multiplication takes at most $O(n^{3})$ scalar operations, and the number of matrices $\bm{R}_{km}$ used in the coefficients of the function $G$ is $n(n-1)/2$, the overall time needed to calculate $G(s)$ is $O(n^{5})$. The calculation of $H(t)$ takes the same time.

\section{Unconstrained bi-objective optimization problem}
\label{S-UBOOP}

We are now in a position to describe our main result, which offers a complete solution to a tropical bi-objective optimization problem in the form of a direct, explicit representation of both Pareto frontier and related Pareto-optimal solution of the problem. 

Suppose that, given square matrices $\bm{A},\bm{B}\in\mathbb{X}^{n\times n}$, we need to find regular vectors $\bm{x}\in\mathbb{X}^{n}$ that solve the bi-objective optimization problem
\begin{equation}
\begin{aligned}
&
\text{minimize}
&&
(\bm{x}^{-}\bm{A}\bm{x},\ \bm{x}^{-}\bm{B}\bm{x}).
\label{P-minxAxxBx}
\end{aligned}
\end{equation}

To cope with this problem, we implement the approach, which is based on the use of parameters, introduced to represent optimal values of the objective functions in the Pareto frontier. We reduce the problem to a parametrized vector inequality, and then exploit the existence condition for the solution of the inequality to evaluate the parameters and to construct the Pareto frontier. Finally, the solutions of the inequality, which correspond to the parameters in the Pareto frontier, are taken as a complete Pareto-optimal solution to the problem. 

The next statement offers a complete solution to problem \eqref{P-minxAxxBx}.
\begin{theorem}
\label{T-minxAxxBx}
Let $\bm{A}$ be a matrix with spectral radius $\mu>\mathbb{0}$, $\bm{B}$ a matrix with spectral radius $\nu>\mathbb{0}$, and $G(s)$ and $H(t)$ be corresponding functions defined as \eqref{E-Gs-Ht}.

Then the following statements hold:
\begin{enumerate}
\item If $\mu<G(\nu)$, then the Pareto frontier of problem \eqref{P-minxAxxBx} is the set of points $(\alpha,\beta)$ defined by the conditions
\begin{equation}
\mu
\leq
\alpha
\leq
G(\nu),
\qquad
\beta
=
H(\alpha),
\label{I-muleqalphaGnu-E-betaeqHalpha}
\end{equation}
and all regular Pareto-optimal solutions are given by
\begin{equation*}
\bm{x}
=
(\alpha^{-1}\bm{A}
\oplus
\beta^{-1}\bm{B})^{\ast}
\bm{u},
\qquad
\bm{u}
>
\bm{0};
\end{equation*}
\item If $\mu\geq G(\nu)$, then the Pareto frontier is reduced to the single point
\begin{equation*}
\alpha
=
\mu,
\qquad
\beta
=
\nu,
\end{equation*}
and all regular solutions are given by 
\begin{equation*}
\bm{x}
=
(\mu^{-1}\bm{A}
\oplus
\nu^{-1}\bm{B})^{\ast}
\bm{u},
\qquad
\bm{u}
>
\bm{0}.
\end{equation*}
\end{enumerate}
\end{theorem}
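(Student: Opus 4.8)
The plan is to follow the parametric scheme announced above: fix a target pair $(\alpha,\beta)$ for the two objective values, reduce the pair of scalar bounds $\bm{x}^{-}\bm{A}\bm{x}\le\alpha$ and $\bm{x}^{-}\bm{B}\bm{x}\le\beta$ to a single vector inequality, read off its existence condition from Theorem~\ref{T-Axleqx}, and turn that condition into explicit constraints on $(\alpha,\beta)$ that carve out the Pareto frontier.

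First I would linearize the objectives. For a regular $\bm{x}$ the bound $\bm{x}^{-}\bm{A}\bm{x}\le\alpha$ holds iff $x_{i}^{-1}a_{ij}x_{j}\le\alpha$ for all $i,j$, i.e. iff $\alpha^{-1}\bm{A}\bm{x}\le\bm{x}$; likewise $\bm{x}^{-}\bm{B}\bm{x}\le\beta$ iff $\beta^{-1}\bm{B}\bm{x}\le\bm{x}$. Since $\bm{u}\le\bm{w}$ and $\bm{v}\le\bm{w}$ together amount to $\bm{u}\oplus\bm{v}\le\bm{w}$, the two bounds fuse into the single inequality $(\alpha^{-1}\bm{A}\oplus\beta^{-1}\bm{B})\bm{x}\le\bm{x}$. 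Writing $\bm{C}=\alpha^{-1}\bm{A}\oplus\beta^{-1}\bm{B}$, Theorem~\ref{T-Axleqx} then states that a regular $\bm{x}$ bounded by $(\alpha,\beta)$ exists exactly when $\mathop\mathrm{Tr}(\bm{C})\le\mathbb{1}$, and that in this case the full solution set is $\bm{x}=\bm{C}^{\ast}\bm{u}$ with regular $\bm{u}$.

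The crux, and the step I expect to be hardest, is converting $\mathop\mathrm{Tr}(\alpha^{-1}\bm{A}\oplus\beta^{-1}\bm{B})\le\mathbb{1}$ into the constraints of the theorem. Here I would apply the binomial trace identity \eqref{E-TrAB} with $\bm{A},\bm{B}$ replaced by $\alpha^{-1}\bm{A},\beta^{-1}\bm{B}$, pulling the scalar factors out of each trace so that a cyclic word with $k$ copies of $\bm{B}$ and $m=i_{1}+\cdots+i_{k}$ copies of $\bm{A}$ gains the coefficient $\alpha^{-m}\beta^{-k}$. As $\mathop\mathrm{Tr}(\bm{C})$ is an idempotent sum, the inequality $\mathop\mathrm{Tr}(\bm{C})\le\mathbb{1}$ splits into one scalar inequality per term, which I would sort into three groups: the pure powers of $\bm{A}$ (coefficient $\alpha^{-k}$), collapsing by the spectral radius formula \eqref{E-lambda-ai1i2ai2i3aiki1} to $\alpha\ge\mu$; the pure powers of $\bm{B}$ (the $m=0$ terms, coefficient $\beta^{-k}$), collapsing to $\beta\ge\nu$; and the genuinely mixed terms with $k\ge1$, $m\ge1$, whose inequalities $\beta^{-k/m}\mathop\mathrm{tr}\nolimits^{1/m}(\bm{B}\bm{A}^{i_{1}}\cdots\bm{B}\bm{A}^{i_{k}})\le\alpha$ are precisely the term-wise form of $G(\beta)\le\alpha$. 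Hence the set of attainable bound pairs is $S=\{(\alpha,\beta):\alpha\ge\mu,\ \beta\ge\nu,\ \alpha\ge G(\beta)\}$, and by the duality $G(\beta)\le\alpha\iff H(\alpha)\le\beta$ the last constraint reads $\beta\ge H(\alpha)$.

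Finally I would extract the Pareto frontier from $S$. Since $S$ is the up-closure of the attainable objective set $\{(\bm{x}^{-}\bm{A}\bm{x},\bm{x}^{-}\bm{B}\bm{x})\}$, the two share the same minimal points, so it suffices to find the lower-left boundary of $S$. The factors $s^{-k/m}$ make $G$ antitone, which drives the case split. If $\mu<G(\nu)$, then along the active trade-off $\alpha=G(\beta)$, equivalently $\beta=H(\alpha)$, decreasing one coordinate forces the other up, so this curve is Pareto-minimal; the strict duality $\alpha<G(\nu)\iff H(\alpha)>\nu$ keeps $\beta\ge\nu$ on its interior, and the admissible range is $\mu\le\alpha\le G(\nu)$, yielding \eqref{I-muleqalphaGnu-E-betaeqHalpha}. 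If $\mu\ge G(\nu)$, then $\alpha\ge\mu\ge G(\nu)\ge G(\beta)$ for every $\beta\ge\nu$ by antitonicity, so the mixed constraint is inactive and $S$ is the quadrant $\{\alpha\ge\mu,\ \beta\ge\nu\}$ with unique minimal point $(\mu,\nu)$. In either case, for a frontier pair $(\alpha,\beta)$ the existence condition is tight, and the vectors $\bm{x}=(\alpha^{-1}\bm{A}\oplus\beta^{-1}\bm{B})^{\ast}\bm{u}$ from the second step realize objective values $\le(\alpha,\beta)$ that lie in $S$; minimality of $(\alpha,\beta)$ forces these values to equal $(\alpha,\beta)$, so each such $\bm{x}$ is Pareto-optimal, while conversely any Pareto-optimal $\bm{x}$ is recovered this way from its own objective pair on the frontier. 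This establishes both statements.
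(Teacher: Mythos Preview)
Your proposal is correct and follows essentially the same route as the paper: linearize the two scalar bounds into the single inequality $(\alpha^{-1}\bm{A}\oplus\beta^{-1}\bm{B})\bm{x}\le\bm{x}$, invoke Theorem~\ref{T-Axleqx}, expand $\mathop\mathrm{Tr}(\alpha^{-1}\bm{A}\oplus\beta^{-1}\bm{B})$ via the binomial trace identity~\eqref{E-TrAB}, and read off the feasible region $\{\alpha\ge\mu,\ \beta\ge\nu,\ \beta\ge H(\alpha)\}$ whose lower-left boundary gives the Pareto frontier under the case split $\mu\lessgtr G(\nu)$. Your final paragraph is in fact slightly more explicit than the paper in justifying why each generated $\bm{x}$ actually attains the frontier pair $(\alpha,\beta)$ rather than merely being bounded by it.
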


\begin{proof}
Denote the minimum values of the objective functions $\bm{x}^{-}\bm{A}\bm{x}$ and $\bm{x}^{-}\bm{B}\bm{x}$ in the Pareto frontier of problem \eqref{P-minxAxxBx} by $\alpha$ and $\beta$. Then, all solutions are defined by the system of equations
\begin{equation*}
\bm{x}^{-}\bm{A}\bm{x}
=
\alpha,
\qquad
\bm{x}^{-}\bm{B}\bm{x}
=
\beta.
\end{equation*}

Since we assume $\alpha$ and $\beta$ to be the minimum values, the set of corresponding regular solutions does not change if the equalities are replaced by the inequalities
\begin{equation*}
\bm{x}^{-}\bm{A}\bm{x}
\leq
\alpha,
\qquad
\bm{x}^{-}\bm{B}\bm{x}
\leq
\beta.
\end{equation*}

By using Lemma~\ref{L-Axleqd}, we solve the first inequality with respect to $\bm{A}\bm{x}$ and the second to $\bm{B}\bm{x}$ to rewrite the system in equivalent form as
\begin{equation*}
\alpha^{-1}\bm{A}\bm{x}
\leq
\bm{x},
\qquad
\beta^{-1}\bm{B}\bm{x}
\leq
\bm{x},
\end{equation*}
which then combine into one inequality
\begin{equation*}
(\alpha^{-1}\bm{A}
\oplus
\beta^{-1}\bm{B})
\bm{x}
\leq
\bm{x}.
\end{equation*}
 
According to Theorem~\ref{T-Axleqx}, regular solutions of the last inequality exist if and only if the following condition holds:
\begin{equation}
\mathop\mathrm{Tr}(\alpha^{-1}\bm{A}\oplus\beta^{-1}\bm{B})
\leq
\mathbb{1},
\label{I-Tralpha1Abeta1B-1}
\end{equation}
and all solutions are given, through a vector of parameters $\bm{u}$, by
\begin{equation}
\bm{x}
=
(\alpha^{-1}\bm{A}
\oplus
\beta^{-1}\bm{B})^{\ast}
\bm{u},
\qquad
\bm{u}
>
\bm{0}.
\label{E-x-alpha1Abeta1Bu}
\end{equation}

To examine the existence condition, we first use \eqref{E-TrAB} for calculating
\begin{equation*}
\mathop\mathrm{Tr}(\alpha^{-1}\bm{A}\oplus\beta^{-1}\bm{B})
=
\bigoplus_{k=1}^{n}\alpha^{-k}\mathop\mathrm{tr}\bm{A}^{k}
\oplus
\bigoplus_{k=1}^{n}
\bigoplus_{m=0}^{n-k}
\bigoplus_{i_{1}+\cdots+i_{k}=m}
\alpha^{-m}\beta^{-k}
\mathop\mathrm{tr}(\bm{B}\bm{A}^{i_{1}}\cdots\bm{B}\bm{A}^{i_{k}}).
\end{equation*} 

In this case, inequality \eqref{I-Tralpha1Abeta1B-1} is equivalent to the system of inequalities
\begin{align*}
\alpha^{-k}\mathop\mathrm{tr}\bm{A}^{k}
&\leq
\mathbb{1},
\\
\beta^{-k}
\bigoplus_{m=0}^{n-k}
\bigoplus_{i_{1}+\cdots+i_{k}=m}
\alpha^{-m}
\mathop\mathrm{tr}(\bm{B}\bm{A}^{i_{1}}\cdots\bm{B}\bm{A}^{i_{k}})
&\leq
\mathbb{1},
\qquad
k=1,\ldots,n.
\end{align*}

By rearranging the terms to isolate powers of $\alpha$ and $\beta$ on the right-hand side, and taking roots, we rewrite the system as
\begin{align*}
\mathop\mathrm{tr}\nolimits^{1/k}(\bm{A}^{k})
&\leq
\alpha,
\\
\bigoplus_{m=0}^{n-k}
\bigoplus_{i_{1}+\cdots+i_{k}=m}
\alpha^{-m/k}
\mathop\mathrm{tr}\nolimits^{1/k}(\bm{B}\bm{A}^{i_{1}}\cdots\bm{B}\bm{A}^{i_{k}})
&\leq
\beta,
\qquad
k=1,\ldots,n.
\end{align*}

We aggregate all these inequalities into two inequalities
\begin{align*}
\alpha
&\geq
\bigoplus_{k=1}^{n}\mathop\mathrm{tr}\nolimits^{1/k}(\bm{A}^{k}),
\\
\beta
&\geq
\bigoplus_{k=1}^{n}\mathop\mathrm{tr}\nolimits^{1/k}(\bm{B}^{k})
\oplus
\bigoplus_{k=1}^{n-1}\bigoplus_{m=1}^{n-k}\bigoplus_{i_{1}+\cdots+i_{k}=m}
\alpha^{-m/k}
\mathop\mathrm{tr}\nolimits^{1/k}(\bm{B}\bm{A}^{i_{1}}\cdots\bm{B}\bm{A}^{i_{k}}).
\end{align*}

With the spectral radii of the matrices $\bm{A}$ and $\bm{B}$ given by
\begin{equation*}
\mu
=
\bigoplus_{k=1}^{n}\mathop\mathrm{tr}\nolimits^{1/k}(\bm{A}^{k}),
\qquad
\nu
=
\bigoplus_{k=1}^{n}\mathop\mathrm{tr}\nolimits^{1/k}(\bm{B}^{k}),
\end{equation*}
and the notation
\begin{equation*}
H(\alpha)
=
\bigoplus_{k=1}^{n-1}\bigoplus_{m=1}^{n-k}\bigoplus_{i_{1}+\cdots+i_{k}=m}
\alpha^{-m/k}
\mathop\mathrm{tr}\nolimits^{1/k}(\bm{B}\bm{A}^{i_{1}}\cdots\bm{B}\bm{A}^{i_{k}}),
\end{equation*}
the last inequalities take the more compact form
\begin{equation}
\alpha
\geq
\mu,
\qquad
\beta
\geq
\nu
\oplus
H(\alpha).
\label{I-alphageqmu-betageqnuHalpha}
\end{equation}

We now consider the feasible area in the $\alpha\beta$-coordinate system, which is defined by these inequalities. Our aim is to determine Pareto-efficient points $(\alpha,\beta)$ in the feasible area, such that no coordinate of the point can be decreased without increasing the other, and thus to construct the Pareto frontier for the problem.

Since all interior points cannot be Pareto-efficient, we need only examine the points on the boundary of the area, which includes an open left vertical segment with $\alpha=\mu$, lower-left segment, where $\beta=\nu\oplus H(\alpha)$, and an open lower horizontal segment with $\beta=\nu$. Observing that the points of both vertical and horizontal segments are obviously not Pareto-efficient, we conclude that the Pareto frontier coincides with the lower-left boundary segment of the feasible area.

To represent the Pareto frontier in a more convenient form, we examine the second inequality at \eqref{I-alphageqmu-betageqnuHalpha}. First, assume that the condition
\begin{equation*}
H(\alpha)
\leq
\nu
\end{equation*} 
is valid. In this case, the Pareto frontier, which is now the lower-left boundary of the area given by the inequalities $\alpha\geq\mu$ and $\beta\geq\nu$, degenerates into one point
\begin{equation*}
\alpha
=
\mu,
\qquad
\beta
=
\nu. 
\end{equation*} 

We solve the inequality $H(\alpha)\leq\nu$ with respect to $\alpha$ by turning to the equivalent inequality $G(\nu)\leq\alpha$. Taking into account that the inequality $\alpha\geq\mu$ holds, we conclude that, under the condition
\begin{equation*}
\mu
\geq
G(\nu),
\end{equation*}
solution \eqref{E-x-alpha1Abeta1Bu}, which simultaneously minimizes both criteria, is reduced to
\begin{equation*} 
\bm{x}
=
(\mu^{-1}\bm{A}
\oplus
\nu^{-1}\bm{B})^{\ast}
\bm{u},
\qquad
\bm{u}
>
\bm{0}.
\end{equation*} 

Otherwise, the Pareto frontier is given by the conditions
\begin{equation*}
\mu
\leq
\alpha
\leq
G(\nu),
\qquad
\beta
=
H(\alpha),
\end{equation*}
whereas the Pareto-optimal solution takes the general form of \eqref{E-x-alpha1Abeta1Bu}.
\qed
\end{proof}

We conclude this section with the observation that the solution obtained has a polynomial time complexity. Indeed, the evaluation of the parameters $\alpha$ and $\beta$ by using the functions $G(s)$ and $H(t)$ requires at most $O(n^{5})$ scalar operations. At the same time, given the parameters $\alpha$ and $\beta$, the calculation of the Kleene star matrix to represent the solution vector $\bm{x}$ takes at most $O(n^{4})$ operations, which leads to the overall linear time complexity of order $O(n^{5})$.

\section{Examples of two-dimensional problems}
\label{S-ETDP}

In this section, we consider illustrative examples of bi-objective two-dimensional problems, and provide complete Pareto-optimal solutions of these problems. The purpose of this section is to demonstrate computational technique involved in the solution, and to give illuminating geometrical illustrations of the results obtained.

\begin{example}
Consider problem \eqref{P-minxAxxBx} with $n=2$ and the matrices
\begin{equation*}
\bm{A}
=
\begin{pmatrix}
a_{11} & a_{12}
\\
a_{21} & a_{22}
\end{pmatrix},
\qquad
\bm{B}
=
\begin{pmatrix}
b_{11} & b_{12}
\\
b_{21} & b_{22}
\end{pmatrix},
\end{equation*}
and assume the entries of both matrices to be nonzero.

To apply Theorem~\ref{T-minxAxxBx}, we first evaluate the spectral radii $\mu$ and $\nu$ of the matrices $\bm{A}$ and $\bm{B}$. With the matrix powers, given by
\begin{equation*}
\bm{A}^{2}
=
\begin{pmatrix}
a_{11}^{2}\oplus a_{12}a_{21} & a_{12}(a_{11}\oplus a_{22})
\\
a_{21}(a_{11}\oplus a_{22}) & a_{12}a_{21}\oplus a_{22}^{2}
\end{pmatrix},
\quad
\bm{B}^{2}
=
\begin{pmatrix}
b_{11}^{2}\oplus b_{12}b_{21} & b_{12}(b_{11}\oplus b_{22})
\\
b_{21}(b_{11}\oplus b_{22}) & b_{12}b_{21}\oplus b_{22}^{2}
\end{pmatrix},
\end{equation*}
we apply \eqref{E-lambda-ai1i2ai2i3aiki1} for $n=2$ to obtain
\begin{equation*}
\mu
=
a_{11}\oplus a_{12}^{1/2}a_{21}^{1/2}\oplus a_{22},
\qquad
\nu
=
b_{11}\oplus b_{12}^{1/2}b_{21}^{1/2}\oplus b_{22}.
\end{equation*}

Furthermore, we use \eqref{E-Gs-Ht} to construct the functions
\begin{equation*}
G(s)
=
s^{-1}\mathop\mathrm{tr}(\bm{B}\bm{A}),
\qquad
H(t)
=
t^{-1}\mathop\mathrm{tr}(\bm{B}\bm{A}),
\end{equation*}
where the trace of the matrix
\begin{equation*}
\bm{B}\bm{A}
=
\begin{pmatrix}
a_{11}b_{11}\oplus a_{12}b_{21} & a_{11}b_{12}\oplus a_{12}b_{22}
\\
a_{21}b_{11}\oplus a_{22}b_{21} & a_{21}b_{12}\oplus a_{22}b_{22}
\end{pmatrix}
\end{equation*}
on the right-hand sides is calculated as
\begin{equation*}
\mathop\mathrm{tr}(\bm{A}\bm{B})
=
a_{11}b_{11}\oplus a_{12}b_{21}
\oplus
a_{21}b_{12}\oplus a_{22}b_{22}.
\end{equation*}

Next, we follow \eqref{E-TrA-trAtrAn} to expand the Kleene star matrix $(\alpha^{-1}\bm{A}\oplus\beta^{-1}\bm{B})^{\ast}$, whose columns generate the solution vectors. Taking onto account that $\alpha\geq\mu\geq a_{ii}$ and $\beta\geq\nu\geq b_{ii}$ for $i=1,2$, we obtain
\begin{equation*}
(\alpha^{-1}\bm{A}\oplus\beta^{-1}\bm{B})^{\ast}
=
\bm{I}
\oplus
\alpha^{-1}\bm{A}\oplus\beta^{-1}\bm{B}
=
\begin{pmatrix}
\mathbb{1} & \alpha^{-1}a_{12}\oplus\beta^{-1}b_{12}
\\
\alpha^{-1}a_{21}\oplus\beta^{-1}b_{21} & \mathbb{1}
\end{pmatrix}.
\end{equation*}

We are now in a position to rewrite the statement of Theorem~\ref{T-minxAxxBx} in terms of the two-dimensional problem under consideration. With the notation $c=\mathop\mathrm{tr}(\bm{B}\bm{A})$, the solution to the problem is given as follows. If $\mu\nu<c$, then the Pareto frontier $(\alpha,\beta)$ of problem \eqref{P-minxAxxBx} is defined by the conditions
\begin{equation}
\mu
\leq
\alpha
\leq
\nu^{-1}c,
\qquad
\beta
=
\alpha^{-1}c.
\label{I-mualphanu1c-betaalpha1c}
\end{equation}

All regular Pareto-optimal solutions are written, using a vector $\bm{u}=(u_{1},u_{2})^{T}$, as
\begin{equation}
\bm{x}
=
\left(
\begin{array}{cc}
\mathbb{1} & \alpha^{-1}a_{12}\oplus\beta^{-1}b_{12}
\\
\alpha^{-1}a_{21}\oplus\beta^{-1}b_{21} & \mathbb{1}
\end{array}
\right)
\bm{u},
\qquad
\bm{u}
>
\bm{0}.
\label{E-x1alpha1a12beta1b12u-u0}
\end{equation}

Let us examine the collinearity of the columns in the generating matrix, given by
\begin{equation*}
\begin{pmatrix}
\mathbb{1}
\\
\alpha^{-1}a_{21}\oplus\beta^{-1}b_{21}
\end{pmatrix},
\qquad
\begin{pmatrix}
\alpha^{-1}a_{12}\oplus\beta^{-1}b_{12}
\\
\mathbb{1}
\end{pmatrix}.
\end{equation*}

It is easy to see that these two columns are collinear if and only if the equality condition $(\alpha^{-1}a_{12}\oplus\beta^{-1}b_{12})(\alpha^{-1}a_{21}\oplus\beta^{-1}b_{21})=\mathbb{1}$ holds. After expanding the left-hand side, the condition becomes
\begin{equation}
\alpha^{-2}a_{12}a_{21}
\oplus
\alpha^{-1}\beta^{-1}
(a_{12}b_{21}
\oplus
a_{21}b_{12})
\oplus
\beta^{-2}b_{12}b_{21}
=
\mathbb{1}.
\label{E-alpha2a12a22}
\end{equation}

It follows from the conditions on $\alpha$ and $\beta$ at \eqref{I-mualphanu1c-betaalpha1c} that $\alpha\geq\mu\geq(a_{12}a_{21})^{1/2}$, $\alpha\beta=c\geq a_{12}b_{21}\oplus a_{21}b_{12}$, and $\beta=\alpha^{-1}c\geq\nu\geq(b_{12}b_{21})^{1/2}$.

As a direct consequence, we obtain the following inequalities: $\alpha^{-2}a_{12}a_{21}\leq\mathbb{1}$, $\alpha^{-1}\beta^{-1}(a_{12}b_{21}\oplus a_{21}b_{12})\leq\mathbb{1}$, and $\beta^{-2}b_{12}b_{21}\leq\mathbb{1}$. Note that the first inequality holds as equality if and only if $\alpha=\mu$ and $\mu=(a_{12}a_{21})^{1/2}$. The second inequality becomes an equality if $c=a_{12}b_{21}\oplus a_{21}b_{12}$, and the third does if $\alpha=\nu^{-1}c$ and $\nu=(b_{12}b_{21})^{1/2}$.

The inequalities obtained combine into one inequality, which has the same left-hand side as the collinearity condition at \eqref{E-alpha2a12a22},
\begin{equation}
\alpha^{-2}a_{12}a_{21}
\oplus
\alpha^{-1}\beta^{-1}
(a_{12}b_{21}
\oplus
a_{21}b_{12})
\oplus
\beta^{-2}b_{12}b_{21}
\leq
\mathbb{1}.
\label{I-alpha2a12a22}
\end{equation}

This composite inequality holds as equality, resulting in collinear columns in the generating matrix, if and only if at least one component inequality holds as equality.

We now summarize the above discussion on the collinearity of columns to refine the solution by dropping one of the columns, say the first, if they are collinear. We consider the following conditions, which yield collinear columns. 

First suppose that the condition $\alpha=\mu=(a_{12}a_{21})^{1/2}$ holds, Then, $\beta=\mu^{-1}c$, and the Pareto-optimal solution is given up to a positive factor by the vector
\begin{equation*}
\begin{pmatrix}
a_{12}^{1/2}a_{21}^{-1/2}\oplus a_{12}^{1/2}a_{21}^{1/2}b_{12}c^{-1}
\\
\mathbb{1}
\end{pmatrix}.
\end{equation*}

Under the condition $c=a_{12}b_{21}\oplus a_{21}b_{12}$, we have $\beta=\alpha^{-1}(a_{12}b_{21}\oplus a_{21}b_{12})$, and the solutions take the form of the vector
\begin{equation*}
\begin{pmatrix}
\alpha^{-1}a_{12}\oplus\alpha b_{12}(a_{12}b_{21}\oplus a_{21}b_{12})^{-1}
\\
\mathbb{1}
\end{pmatrix},
\qquad
\mu
\leq
\alpha
\leq
\nu^{-1}(a_{12}b_{21}\oplus a_{21}b_{12}).
\end{equation*}

Note that these solutions are generated by the matrix with columns obtained from the above vector by setting $\alpha=\mu$ and $\alpha=\nu^{-1}(a_{12}b_{21}\oplus a_{21}b_{12})$, which is given by 
\begin{equation*}
\begin{pmatrix}
\mu^{-1}a_{12}\oplus\mu b_{12}(a_{12}b_{21}\oplus a_{21}b_{12})^{-1}
&\;\;&
\nu a_{12}(a_{12}b_{21}\oplus a_{21}b_{12})^{-1}\oplus\nu^{-1}b_{12}
\\
\mathbb{1}
& &
\mathbb{1}
\end{pmatrix}.
\end{equation*}

Furthermore, with $\beta=\nu=(b_{12}b_{21})^{1/2}$, we have $\alpha=(b_{12}b_{21})^{-1/2}c$, which yields the solution
\begin{equation*}
\begin{pmatrix}
a_{12}b_{12}^{1/2}b_{21}^{1/2}c
\oplus
b_{12}^{-1/2}b_{21}^{1/2}
\\
\mathbb{1}
\end{pmatrix}.
\end{equation*}

If none of the above conditions is valid, inequality \eqref{I-alpha2a12a22} becomes strict, which means that the columns in the generating matrix are non-collinear. In this case, the solution retains the general form \eqref{E-x1alpha1a12beta1b12u-u0}, where the matrix cannot be reduced to a vector.

Consider the Pareto frontier, which forms a segment with the endpoints
\begin{equation*}
(\mu,\ \mu^{-1}c),
\qquad
(\nu^{-1}c,\ \nu).
\end{equation*}

The first endpoint corresponds to the solution
\begin{equation*}
\bm{x}_{\mu}
=
\begin{pmatrix}
\mathbb{1} & \mu^{-1}a_{12}
\oplus
\mu b_{12}c^{-1}
\\
\mu^{-1}a_{21}
\oplus
\mu b_{21}c^{-1} & \mathbb{1}
\end{pmatrix}
\bm{u},
\qquad
\bm{u}
>
\bm{0}.
\end{equation*}

It follows from the above discussion that, if either the conditions $\mu=(a_{12}a_{21})^{1/2}$ or $c=a_{12}b_{21}\oplus a_{21}b_{12}$ are satisfied, then the solution takes the form of unique (up to a positive factor) vectors, given respectively by
\begin{equation*}
\begin{pmatrix}
a_{12}^{1/2}a_{21}^{-1/2}
\oplus
a_{12}^{1/2}a_{21}^{1/2}b_{12}c^{-1}
\\
\mathbb{1}
\end{pmatrix},
\qquad
\begin{pmatrix}
\mu^{-1}a_{12}
\oplus
\mu b_{12}(a_{12}b_{21}\oplus a_{21}b_{12})^{-1}
\\
\mathbb{1}
\end{pmatrix}.
\end{equation*}

The second endpoint yields the solution in the form
\begin{equation*}
\bm{x}_{\nu}
=
\begin{pmatrix}
\mathbb{1} & \nu a_{12}c^{-1}
\oplus
\nu^{-1}b_{12}
\\
\nu a_{21}c^{-1}
\oplus
\nu^{-1}b_{21} & \mathbb{1}
\end{pmatrix}
\bm{v},
\qquad
\bm{v}
>
\bm{0},
\end{equation*}
which, under the conditions $c=a_{12}b_{21}\oplus a_{21}b_{12}$ or $\nu=(b_{12}b_{21})^{1/2}$, reduces to unique vectors, defined respectively as
\begin{equation*}
\begin{pmatrix}
\nu a_{12}(a_{12}b_{21}\oplus a_{21}b_{12})^{-1}
\oplus
\nu^{-1}b_{12}
\\
\mathbb{1}
\end{pmatrix},
\qquad
\begin{pmatrix}
a_{12}b_{12}^{1/2}b_{21}^{1/2}c^{-1}
\oplus
b_{12}^{1/2}b_{21}^{-1/2}
\\
\mathbb{1}
\end{pmatrix}.
\end{equation*}

In the case that $\mu\nu\geq c$, the Pareto frontier shrinks into the single point
\begin{equation*}
\alpha
=
\mu,
\qquad
\beta
=
\nu,
\end{equation*}
and all regular solutions are given by 
\begin{equation*}
\bm{x}
=
\begin{pmatrix}
\mathbb{1} & \mu^{-1}a_{12}\oplus\nu^{-1}b_{12}
\\
\mu^{-1}a_{21}\oplus\nu^{-1}b_{21} & \mathbb{1}
\end{pmatrix}
\bm{u},
\qquad
\bm{u}
>
\bm{0}.
\end{equation*}

If one of the conditions $\mu=(a_{12}a_{21})^{1/2}$, $c=a_{12}b_{21}\oplus a_{21}b_{12}$, and $\nu=(b_{12}b_{21})^{1/2}$ holds, the solution reduces to unique (up to a positive factor) vectors, given by
\begin{equation*}
\begin{pmatrix}
a_{12}^{1/2}a_{21}^{-1/2}
\oplus
\nu^{-1}b_{12}
\\
\mathbb{1}
\end{pmatrix},
\qquad
\begin{pmatrix}
\mu^{-1}a_{12}\oplus\nu^{-1}b_{12}
\\
\mathbb{1}
\end{pmatrix},
\qquad
\begin{pmatrix}
\mu^{-1}a_{12}
\oplus
b_{12}^{1/2}b_{21}^{-1/2}
\\
\mathbb{1}
\end{pmatrix}.
\end{equation*}

Note that, in the case when the Pareto frontier degenerates into one point, the solution does not have to be a unique (up to a positive factor) vector, and can be a cone formed by two non-collinear vectors, if the above conditions do not hold.  

In Fig.~\ref{F-EPF} and \ref{F-EPOSCVS}, we provide a graphical illustration of the discussion, given in the framework of the $\mathbb{R}_{\max,\times}$ semifield (max-algebra). Fig.~\ref{F-EPF} offers examples of Pareto frontiers in the form of a segment (left), and in a degenerate form of a point (right).
\begin{figure}[ht]
\setlength{\unitlength}{1mm}
\begin{center}
\begin{picture}(55,55)

\put(0,5){\vector(1,0){52}}
\put(5,0){\vector(0,1){52}}

\put(4,15){\line(1,0){41}}
\put(25,4){\line(0,1){41}}

\put(35,15){\thicklines\line(1,0){10}}
\multiput(36,15)(0.8,0){11}{\line(0,1){0.8}}

\put(25,20){\thicklines\line(0,1){25}}
\multiput(25,21)(0,0.8){30}{\line(1,0){0.8}}

\qbezier(12.5,45)(16,16)(45,12.5)

%\qbezier(13,38)(19,18)(40,13.6)

{\thicklines{\qbezier(25,20)(27,18)(35,14.9)}}

%\qbezier[15](25.1,20.1)(27.1,18.1)(35.1,15.1)
%\qbezier[15](25.15,20.15)(27.15,18.15)(35.15,15.15)
%\qbezier[15](25.2,20.2)(27.2,18.2)(35.2,15.2)
%\qbezier[15](25.25,20.25)(27.25,18.25)(35.25,15.25)
%\qbezier[15](25.3,20.3)(27.3,18.3)(35.3,15.3)
%\qbezier[15](25.35,20.35)(27.35,18.35)(35.35,15.35)
%\qbezier[15](25.4,20.4)(27.4,18.4)(35.4,15.4)
%\qbezier[15](25.45,20.45)(27.45,18.45)(35.45,15.45)
%\qbezier[15](25.5,20.5)(27.5,18.5)(35.5,15.5)
%\qbezier[15](25.55,20.55)(27.55,18.55)(35.55,15.55)
%\qbezier[15](25.6,20.6)(27.6,18.6)(35.6,15.6)
%\qbezier[15](25.65,20.65)(27.65,18.65)(35.65,15.65)

\qbezier[14](25.1,20.1)(27.1,18.1)(35.1,15.1)
\qbezier[14](25.15,20.15)(27.15,18.15)(35.15,15.15)
\qbezier[14](25.2,20.2)(27.2,18.2)(35.2,15.2)
\qbezier[14](25.25,20.25)(27.25,18.25)(35.25,15.25)
\qbezier[14](25.3,20.3)(27.3,18.3)(35.3,15.3)
\qbezier[14](25.35,20.35)(27.35,18.35)(35.35,15.35)
\qbezier[14](25.4,20.4)(27.4,18.4)(35.4,15.4)
\qbezier[14](25.45,20.45)(27.45,18.45)(35.45,15.45)
\qbezier[14](25.5,20.5)(27.5,18.5)(35.5,15.5)
\qbezier[14](25.55,20.55)(27.55,18.55)(35.55,15.55)
\qbezier[14](25.6,20.6)(27.6,18.6)(35.6,15.6)
\qbezier[14](25.65,20.65)(27.65,18.65)(35.65,15.65)

\put(4,20){\line(1,0){21}}
\put(35,4){\line(0,1){11}}

\put(25,20){\circle*{1.5}}
\put(35,15){\circle*{1.5}}

\put(2,2){$0$}

\put(1,15){$\nu$}
\put(24,1){$\mu$}

\put(0,21){$c/\mu$}

\put(33,1){$c/\nu$}

%\put(32,32){$c=\mathop\mathrm{tr}(\bm{B}\bm{A})$}
%\put(30,32){$\alpha\beta<c=\mathop\mathrm{tr}(\bm{B}\bm{A})$}

\put(8,47){$\alpha\beta=c$}

\put(1,52){$\beta$}
\put(52,1){$\alpha$}

\end{picture}
\hspace{7\unitlength}
\begin{picture}(55,55)

\put(0,5){\vector(1,0){52}}
\put(5,0){\vector(0,1){52}}

\put(4,25){\line(1,0){41}}
\put(20,4){\line(0,1){41}}

\put(20,25){\thicklines\line(1,0){25}}
\multiput(21,25)(0.8,0){30}{\line(0,1){0.8}}

\put(20,25){\thicklines\line(0,1){20}}
\multiput(20,25)(0,0.8){25}{\line(1,0){0.8}}

%\qbezier(15,35)(20,20)(35,15)}

\qbezier(10,45)(15,15)(45,10)

%\put(4,20){\line(1,0){21}}
%\put(35,4){\line(0,1){11}}

\put(20,25){\circle*{1.5}}

\put(2,2){$0$}

\put(1,25){$\nu$}
\put(19,1){$\mu$}

%\put(32,32){$c=\mathop\mathrm{tr}(\bm{B}\bm{A})$}
%\put(30,32){$\alpha\beta\geq c=\mathop\mathrm{tr}(\bm{B}\bm{A})$}

\put(39,13){$\alpha\beta=c$}

\put(1,52){$\beta$}
\put(52,1){$\alpha$}

\end{picture}
\end{center}
\caption{Examples of Pareto frontiers with $\mu\nu<c$ (left), and $\mu\nu\geq c$ (right), where $c=\mathop\mathrm{tr}(\bm{B}\bm{A})$.}
\label{F-EPF}
\end{figure}

In Fig.~\ref{F-EPOSCVS}, we demonstrate examples of the Pareto-optimal solutions in the form of a cone (left), and of a vector (right).
\begin{figure}[ht]
\setlength{\unitlength}{1mm}
\begin{center}
\begin{picture}(55,55)

\put(5,0){\vector(0,1){52}}
\put(0,5){\vector(1,0){52}}

\put(4,15){\line(1,0){31}}

\put(12.5,4){\line(0,1){11}}
\put(35,4){\line(0,1){11}}

\put(5,5){\line(3,4){24}}
\multiput(5,5)(0.6,0.8){40}{\line(1,0){1}}
\put(5,5){\thicklines\vector(3,4){7.3}}

\put(5,5){\line(3,1){40}}
\multiput(5,5)(1.0,0.3333){40}{\line(0,1){1}}
\put(5,5){\thicklines\vector(3,1){30}}

\put(5,5){\thicklines\vector(4,3){33}}

\put(2,2){$0$}
\put(2,15){$1$}

\put(7,1){$a_{12}/\mu\oplus\mu b_{12}/c$}
\put(30,1){$\nu a_{12}/c\oplus b_{12}/\nu$}

\put(1,52){$x_{2}$}
\put(52,1){$x_{1}$}

\put(9,17){$\bm{x}_{\mu}$}
\put(37,12){$\bm{x}_{\nu}$}

\put(36,32){$\bm{x}$}

\end{picture}
\hspace{7\unitlength}
\begin{picture}(55,55)

\put(5,0){\vector(0,1){52}}
\put(0,5){\vector(1,0){52}}

\put(5,5){\line(2,1){40}}
\put(5,5){\thicklines\vector(2,1){30}}

\put(4,15){\line(1,0){21}}
\put(25,4){\line(0,1){11}}

%\put(5,5){\line(1,2){20}}
%\multiput(5,5)(0.5,1){40}{\line(1,0){1}}
%\put(5,5){\thicklines\vector(1,2){5}}

%\put(5,5){\line(3,1){40}}
%\multiput(5,5)(1.0,0.3333){40}{\line(0,1){1}}
%\put(5,5){\thicklines\vector(3,1){30}}

\put(2,2){$0$}
\put(2,15){$1$}

%\put(9,1){$a$}
\put(16.5,1){$a_{12}/\mu\oplus b_{12}/\nu$}

\put(1,52){$x_{2}$}
\put(52,1){$x_{1}$}

%\put(8,17){$\bm{x}^{\prime}$}
%\put(36,12){$\bm{x}^{\prime\prime}$}

\put(34,22){$\bm{x}$}

\end{picture}
\end{center}
\caption{Examples of Pareto-optimal solution cone (left), and single-vector solution (right).}
\label{F-EPOSCVS}
\end{figure}
\end{example}

\begin{example}
Suppose that the matrices in problem \eqref{P-minxAxxBx} are symmetrically reciprocal, and given by
\begin{equation*}
\bm{A}
=
\begin{pmatrix}
\mathbb{1} & a
\\
a^{-1} & \mathbb{1}
\end{pmatrix},
\qquad
\bm{B}
=
\begin{pmatrix}
\mathbb{1} & b
\\
b^{-1} & \mathbb{1}
\end{pmatrix},
\qquad
a\ne b.
\end{equation*}

By using results of the previous example, we obtain
\begin{equation*}
\mu
=
\nu
=
\mathbb{1},
\qquad
\mathop\mathrm{tr}(\bm{A}\bm{B})
=
ab^{-1}
\oplus
a^{-1}b
=
c.
\end{equation*}

Furthermore, we represent the matrix, which generates the solutions, as follows:
\begin{equation*}
(\alpha^{-1}\bm{A}\oplus\beta^{-1}\bm{B})^{\ast}
=
\begin{pmatrix}
\mathbb{1} & \alpha^{-1}a\oplus\beta^{-1}b
\\
\alpha^{-1}a^{-1}\oplus\beta^{-1}b^{-1} & \mathbb{1}
\end{pmatrix}.
\end{equation*}

Since $a\ne b$, we have $c=a^{-1}b\oplus ab^{-1}>\mathbb{1}$, and hence the condition $\mu\nu<\mathop\mathrm{tr}(\bm{A}\bm{B})$ holds. The Pareto frontier $(\alpha,\beta)$ is defined as
\begin{equation*}
\mathbb{1}
\leq
\alpha
\leq
c,
\qquad
\beta
=
\alpha^{-1}c,
\end{equation*}
and all regular Pareto-optimal solutions are given by
\begin{equation*}
\bm{x}
=
\begin{pmatrix}
\mathbb{1} & \alpha^{-1}a\oplus\beta^{-1}b
\\
\alpha^{-1}a^{-1}\oplus\beta^{-1}b^{-1} & \mathbb{1}
\end{pmatrix}
\bm{u},
\qquad
\bm{u}
>
\bm{0}.
\end{equation*}

Observing that $\beta=\alpha^{-1}c$, $c=a^{-1}b\oplus ab^{-1}$ and $\alpha\leq c$, we have
\begin{equation*}
(\alpha^{-1}a\oplus\beta^{-1}b)(\alpha^{-1}a^{-1}\oplus\beta^{-1}b^{-1})
=
\alpha^{-2}\oplus(ab^{-1}\oplus a^{-1}b)c^{-1}\oplus\alpha^{-2}c^{-2}
=
\mathbb{1},
\end{equation*}
which implies that the columns in the generating matrix are collinear. Taking one of them, say the second, we reduce the generating matrix to one parametrized column
\begin{equation*}
\begin{pmatrix}
\alpha^{-1}a
\oplus
\alpha bc^{-1}
\\
\mathbb{1}
\end{pmatrix},
\qquad
\mathbb{1}
\leq
\alpha
\leq
c.
\end{equation*}

Then, we see that, as $\alpha$ passes from $\mathbb{1}$ to $c$, the value of $\alpha^{-1}a\oplus\alpha bc^{-1}$ changes from $a$ to $b$. As a result, all solutions can be generated by the columns $(a,\mathbb{1})^{T}$ and $(b,\mathbb{1})^{T}$ to provide a new matrix representation, where the parameter $\alpha$ is eliminated,   
\begin{equation*}
\bm{x}
=
\begin{pmatrix}
a & b
\\
\mathbb{1} & \mathbb{1}
\end{pmatrix}
\bm{u},
\qquad
\bm{u}
>
\bm{0}.
\end{equation*}

The endpoints of the frontier segment are given by
\begin{equation*}
(\mathbb{1},\ c),
\qquad
(c,\ \mathbb{1}),
\end{equation*}
which correspond to the solutions
\begin{equation*}
\bm{x}_{\mu}
=
\begin{pmatrix}
a
\\
\mathbb{1}
\end{pmatrix}
u,
\qquad
u>0;
\qquad
\bm{x}_{\nu}
=
\begin{pmatrix}
b
\\
\mathbb{1}
\end{pmatrix}
v,
\qquad
v>0.
\end{equation*}

In Fig.~\ref{F-EPFSCSC}, we give a graphical illustration in terms of the semifield $\mathbb{R}_{\max,\times}$ for the example considered. The Pareto frontier is shown on the left as the lower-left segment of the hatched border of the feasible area for the criteria. The Pareto-optimal solutions $\bm{x}$ are depicted on the right as the cone with hatched boundaries, generated by the vectors $\bm{x}_{\mu}$ and $\bm{x}_{\nu}$.
\begin{figure}[ht]
\setlength{\unitlength}{1mm}
\begin{center}
\begin{picture}(55,55)

\put(0,5){\vector(1,0){52}}
\put(5,0){\vector(0,1){52}}

\put(4,15){\line(1,0){41}}
\put(15,4){\line(0,1){41}}

\put(35,15){\thicklines\line(1,0){10}}
\multiput(36,15)(0.8,0){11}{\line(0,1){0.8}}

\put(15,35){\thicklines\line(0,1){10}}
\multiput(15,36)(0,0.8){11}{\line(1,0){0.8}}

{\thicklines{\qbezier(15,35)(20,20)(35,15)}}

\qbezier[40](15.1,35.1)(20.1,20.1)(35.1,15.1)
\qbezier[40](15.15,35.15)(20.15,20.15)(35.15,15.15)
\qbezier[40](15.2,35.2)(20.2,20.2)(35.2,15.2)
\qbezier[40](15.25,35.25)(20.25,20.25)(35.25,15.25)
\qbezier[40](15.3,35.3)(20.3,20.3)(35.3,15.3)
\qbezier[40](15.35,35.35)(20.35,20.35)(35.35,15.35)
\qbezier[40](15.4,35.4)(20.4,20.4)(35.4,15.4)
\qbezier[40](15.45,35.45)(20.45,20.45)(35.45,15.45)
\qbezier[40](15.5,35.5)(20.5,20.5)(35.5,15.5)
\qbezier[40](15.55,35.55)(20.55,20.55)(35.55,15.55)
\qbezier[40](15.6,35.6)(20.6,20.6)(35.6,15.6)
\qbezier[40](15.65,35.65)(20.65,20.65)(35.65,15.65)

\put(4,35){\line(1,0){11}}
\put(35,4){\line(0,1){11}}

\put(15,35){\circle*{1.5}}
\put(35,15){\circle*{1.5}}

\put(2,2){$0$}

\put(2,15){$1$}
\put(14.3,1){$1$}

\put(1,35){$c$}

\put(34.3,1){$c$}

\put(32,32){$c=a^{-1}b\oplus ab^{-1}$}

\put(1,52){$\beta$}
\put(52,1){$\alpha$}

\end{picture}
\hspace{7\unitlength}
\begin{picture}(55,55)

\put(0,5){\vector(1,0){52}}
\put(5,0){\vector(0,1){52}}

\put(4,15){\line(1,0){21}}

\put(11.75,4){\line(0,1){11}}
\put(25,4){\line(0,1){11}}

\put(5,5){\line(2,3){25}}
\multiput(5,5)(0.5,0.75){50}{\line(1,0){1}}
\put(5,5){\thicklines\vector(2,3){6.6}}

\put(5,5){\line(2,1){40}}
\multiput(5,5)(0.8,0.4){50}{\line(0,1){1}}
\put(5,5){\thicklines\vector(2,1){20}}

\put(5,5){\thicklines\vector(4,3){35}}

\put(2,2){$0$}
\put(2,15){$1$}

\put(11,1){$a$}
\put(24.5,1){$b$}

\put(1,52){$x_{2}$}
\put(52,1){$x_{1}$}

\put(8,17){$\bm{x}_{\mu}$}
\put(27,13){$\bm{x}_{\nu}$}

\put(38,33){$\bm{x}$}

\end{picture}
\end{center}
\caption{Examples of Pareto frontier segment (left) and corresponding solution cone (right).}
\label{F-EPFSCSC}
\end{figure}

It is clear from the illustration that, as $b$ tends to $a$, the frontier degenerates into the point $(1,1)$, whereas the solution cone does to the single (up to a positive factor) vector $\bm{x}=(a,1)^{T}$.
\end{example}

\section{Application to bi-criteria decision problem}
\label{S-ABCDS}

We now turn back to the problem of evaluating scores of alternatives based on pairwise comparisons under two equally weighted (unweighted) criteria, and apply results of the previous sections to solve this problem. We use the solution approach, which involves minimax log-Chebyshev approximation of pairwise comparison matrices, and leads to the solution of the bi-objective problem in the form of \eqref{P-minmaxaijxjxi-maxbijxjxi}.

It is not difficult to see that the representation of problem \eqref{P-minmaxaijxjxi-maxbijxjxi} in terms of the semifield $\mathbb{R}_{\max,\times}$ (max-algebra) yields problem \eqref{P-minxAxxBx}. In this case, a complete Pareto-optimal solution to the problem is given by the direct application of Theorem~\ref{T-minxAxxBx}. Below, we demonstrate the use of the theorem by an illustrative numerical example.

\begin{example}
Consider a problem to rate $n=4$ alternatives using pairwise comparison data obtained according to two equally weighted criteria, and given by the matrices
\begin{equation*}
\bm{A}
=
\begin{pmatrix}
1 & 3 & 4 & 2
\\
1/3 & 1 & 1/2 & 1/3
\\
1/4 & 2 & 1 & 4
\\
1/2 & 3 & 1/4 & 1
\end{pmatrix},
\qquad
\bm{B}
=
\begin{pmatrix}
1 & 2 & 4 & 2
\\
1/2 & 1 & 1/3 & 1/2
\\
1/4 & 3 & 1 & 4
\\
1/2 & 2 & 1/4 & 1
\end{pmatrix}.
\end{equation*}

To find the solution vector $\bm{x}=(x_{1},x_{2},x_{3},x_{4})^{T}$ by applying Theorem~\ref{T-minxAxxBx}, we have to start with evaluating the spectral radii $\mu$ and $\nu$, and constructing the functions $G$ and $H$. First, we calculate the matrix powers
\begin{equation*}
\bm{A}^{2}
=
\begin{pmatrix}
1 & 8 & 4 & 16
\\
1/3 & 1 & 4/3 & 2
\\
2 & 12 & 1 & 4
\\
1 & 3 & 2 & 1
\end{pmatrix},
\quad
\bm{A}^{3}
=
\begin{pmatrix}
8 & 48 & 4 & 16
\\
1 & 6 & 4/3 & 16/3
\\
4 & 12 & 8 & 4
\\
1 & 4 & 4 & 8
\end{pmatrix},
\quad
\bm{A}^{4}
=
\begin{pmatrix}
16 & 48 & 32 & 16
\\
8/3 & 16 & 4 & 16/3
\\
4 & 16 & 16 & 32
\\
4 & 24 & 4 & 16
\end{pmatrix},
\end{equation*}
and then use \eqref{E-lambda-ai1i2ai2i3aiki1} to obtain the spectral radius of $\bm{A}$ as follows: 
\begin{equation*}
\mu
=
%\bigoplus_{k=1}^{4}\mathop\mathrm{tr}\nolimits^{1/k}(\bm{A}^{k})
\mathop\mathrm{tr}\bm{A}
\oplus
\mathop\mathrm{tr}\nolimits^{1/2}(\bm{A}^{2})
\oplus
\mathop\mathrm{tr}\nolimits^{1/3}(\bm{A}^{3})
\oplus
\mathop\mathrm{tr}\nolimits^{1/4}(\bm{A}^{4})
=
2.
\end{equation*}

In the same way, we form the matrices
\begin{equation*}
\bm{B}^{2}
=
\begin{pmatrix}
1 & 12 & 4 & 16
\\
1/2 & 1 & 2 & 4/3
\\
2 & 8 & 1 & 4
\\
1 & 2 & 2 & 1
\end{pmatrix},
\quad
\bm{B}^{3}
=
\begin{pmatrix}
8 & 32 & 4 & 16
\\
2/3 & 6 & 2 & 8
\\
4 & 8 & 8 & 4
\\
1 & 6 & 4 & 8
\end{pmatrix},
\quad
\bm{B}^{4}
=
\begin{pmatrix}
16 & 32 & 32 & 16
\\
4 & 16 & 8/3 & 8
\\
4 & 24 & 16 & 32
\\
4 & 16 & 4 & 16
\end{pmatrix},
\end{equation*}
to calculate their traces, and thus find the spectral radius of $\bm{B}$ to be
\begin{equation*}
\nu
=
2.
\end{equation*}
 
Furthermore, with $n=4$, the function $G$ defined by \eqref{E-Gs-Ht} takes the form
\begin{multline*}
G(s)
=
s^{-1}\mathop\mathrm{tr}(\bm{B}\bm{A})
\oplus
s^{-1/2}\mathop\mathrm{tr}\nolimits^{1/2}(\bm{B}\bm{A}^{2})
\oplus
s^{-1/3}\mathop\mathrm{tr}\nolimits^{1/3}(\bm{B}\bm{A}^{3})
\\
\oplus
s^{-2}\mathop\mathrm{tr}(\bm{B}^{2}\bm{A})
\oplus
s^{-1}\mathop\mathrm{tr}\nolimits^{1/2}(\bm{B}^{2}\bm{A}^{2})
\oplus
s^{-1}\mathop\mathrm{tr}\nolimits^{1/2}((\bm{B}\bm{A})^{2})
\oplus
s^{-3}\mathop\mathrm{tr}(\bm{B}^{3}\bm{A}).
\end{multline*}

To evaluate the coefficients in the function, we calculate the matrices
\begin{gather*}
\bm{B}\bm{A}
=
\begin{pmatrix}
1 & 8 & 4 & 16
\\
1/2 & 3/2 & 2 & 4/3
\\
2 & 12 & 3/2 & 4
\\
2/3 & 3 & 2 & 1
\end{pmatrix},
\qquad
\bm{B}\bm{A}^{2}
=
\begin{pmatrix}
8 & 48 & 4 & 16
\\
2/3 & 4 & 2 & 8
\\
4 & 12 & 8 & 6
\\
1 & 4 & 8/3 & 8
\end{pmatrix},
\\
\bm{B}\bm{A}^{3}
=
\begin{pmatrix}
16 & 48 & 32 & 16
\\
4 & 24 & 8/3 & 8
\\
4 & 18 & 16 & 32
\\
4 & 24 & 4 & 32/3
\end{pmatrix},
\qquad
\bm{B}^{2}\bm{A}
=
\begin{pmatrix}
8 & 48 & 6 & 16
\\
2/3 & 4 & 2 & 8
\\
8/3 & 12 & 8 & 4
\\
1 & 4 & 4 & 8
\end{pmatrix},
\\
\bm{B}^{2}\bm{A}^{2}
=
\begin{pmatrix}
16 & 48 & 32 & 24
\\
4 & 24 & 8/3 & 8
\\
4 & 16 & 32/3 & 32
\\
4 & 24 & 8/3 & 16
\end{pmatrix},
\qquad
(\bm{B}\bm{A})^{2}
=
\begin{pmatrix}
32/3 & 48 & 32 & 16
\\
4 & 24 & 3 & 8
\\
6 & 18 & 24 & 32
\\
4 & 24 & 6 & 32/3
\end{pmatrix},
\\
\bm{B}^{3}\bm{A}
=
\begin{pmatrix}
32/3 & 48 & 32 & 16
\\
4 & 24 & 3 & 8
\\
4 & 16 & 16 & 32
\\
4 & 24 & 4 & 16
\end{pmatrix}.
\end{gather*}

Evaluating the traces of the matrices obtained yields
\begin{gather*}
\mathop\mathrm{tr}(\bm{B}\bm{A})
=
3/2,
\qquad
\mathop\mathrm{tr}(\bm{B}\bm{A}^{2})
=
8,
\qquad
\mathop\mathrm{tr}(\bm{B}\bm{A}^{3})
=
24,
\qquad
\mathop\mathrm{tr}(\bm{B}^{2}\bm{A})
=
8,
\\
\mathop\mathrm{tr}(\bm{B}^{2}\bm{A}^{2})
=
24,
\qquad
%\mathop\mathrm{tr}((\bm{B}\bm{A})^{2})
\mathop\mathrm{tr}(\bm{B}\bm{A})^{2}
=
24,
\qquad
\mathop\mathrm{tr}(\bm{B}^{3}\bm{A})
=
24.
\end{gather*}

After substitution of traces and rearrangement of terms, the function becomes
\begin{equation*}
G(s)
=
24s^{-3}
\oplus
8s^{-2}
\oplus
24^{1/2}s^{-1}
\oplus
8^{1/2}s^{-1/2}
\oplus
24^{1/3}s^{-1/3}.
\end{equation*}
 
Similarly, we construct the function
\begin{multline*}
H(t)
=
t^{-1}\mathop\mathrm{tr}(\bm{B}\bm{A})
\oplus
t^{-2}\mathop\mathrm{tr}(\bm{B}\bm{A}^{2})
\oplus
t^{-3}\mathop\mathrm{tr}(\bm{B}\bm{A}^{3})
\oplus
t^{-1/2}\mathop\mathrm{tr}\nolimits^{1/2}(\bm{B}^{2}\bm{A})
\\
\oplus
t^{-1}\mathop\mathrm{tr}\nolimits^{1/2}(\bm{B}^{2}\bm{A}^{2})
\oplus
t^{-1}\mathop\mathrm{tr}\nolimits^{1/2}((\bm{B}\bm{A})^{2})
\oplus
t^{-1/3}\mathop\mathrm{tr}\nolimits^{1/3}(\bm{B}^{3}\bm{A}),
\end{multline*}
and then reduce it to
\begin{equation*}
H(t)
=
24t^{-3}
\oplus
8t^{-2}
\oplus
24^{1/2}t^{-1}
\oplus
8^{1/2}t^{-1/2}
\oplus
24^{1/3}t^{-1/3}.
\end{equation*}

We now construct the Pareto frontier for the problem as a set of points $(\alpha,\beta)$ given by \eqref{I-muleqalphaGnu-E-betaeqHalpha}. We start with the adjustment of the range of the parameter $\alpha$, defined by the inequality $\mu\leq\alpha\leq G(\nu)$. Since, the substitution $\nu=2$ yields $G(2)=3$, the inequality becomes $2\leq\alpha\leq3$.

Next, we consider the equality $\beta=H(\alpha)$, and refine the function $H(t)$ on the right-hand side by taking into account the range $2\leq t\leq3$. First, we note that the condition $t\geq2$ leads to the inequalities $8t^{-2}\leq24^{1/3}t^{-1/3}$ and $8^{1/2}t^{-1/2}\leq24^{1/3}t^{-1/3}$. At the same time, it follows from the inequality $t\leq3$ that $24^{1/2}t^{-1}\geq8^{1/2}t^{-1/2}$. As a result, if $2\leq t\leq 3$, the function reduces to $H(t)=24t^{-3}\oplus24^{1/2}t^{-1}\oplus24^{1/3}t^{-1/3}$.

We denote $\theta=24^{1/4}\approx2.2134$, and observe that, for all $t\geq\theta$, we have the inequality $24^{1/2}t^{-1}\leq24^{1/3}t^{-1/3}$, and for $t\leq\theta$, the inequality $24^{1/2}t^{-1}\leq24t^{-3}$. In this case, the function becomes $H(t)=24t^{-3}\oplus24^{1/3}t^{-1/3}$, which, together with the boundary condition on $\alpha$, yields the description of the Pareto frontier in the form
\begin{equation*}
2
\leq
\alpha
\leq
3,
\qquad
\beta
=
24\alpha^{-3}\oplus24^{1/3}\alpha^{-1/3}.
\end{equation*} 

With $\alpha$ and $\beta$ given by these conditions, and a vector $\bm{u}=(u_{1},u_{2},u_{3},u_{4})^{T}$, the Pareto-optimal solution to the problem is represented as
\begin{equation*}
\bm{x}
=
(\alpha^{-1}\bm{A}\oplus\beta^{-1}\bm{B})^{\ast}
\bm{u},
\qquad
\bm{u}
>
\bm{0}.
\end{equation*}

We conclude with the computation of two extreme solutions corresponding to the endpoints of the Pareto frontier, and an intermediate solution for an inner point of the frontier segment. After calculating $H(2)=3$ and $H(3)=2$, we represent the endpoints of the Pareto frontier segment as
\begin{equation*}
(\mu,H(\mu))
=
(2,3),
\qquad
(G(\nu),H(G(\nu)))
=
(3,2).
\end{equation*}

To obtain the matrix, which generates the solution under the conditions $\alpha=2$ and $\beta=3$, we take matrix
\begin{equation*}
2^{-1}\bm{A}\oplus3^{-1}\bm{B}
=
\begin{pmatrix}
1/2 & 3/2 & 2 & 1
\\
1/6 & 1/2 & 1/4 & 1/6
\\
1/8 & 1 & 1/2 & 2
\\
1/4 & 3/2 & 1/8 & 1/2
\end{pmatrix},
\end{equation*}
and calculate its second and third powers
\begin{equation*}
\begin{pmatrix}
1/4 & 2 & 1 & 4
\\
1/12 & 1/4 & 1/3 & 1/2
\\
1/2 & 3 & 1/4 & 1 
\\
1/4 & 3/4 & 1/2 & 1/4 
\end{pmatrix},
\qquad
\begin{pmatrix}
1 & 6 & 1/2 & 2
\\
1/8 & 3/4 & 1/6 & 2/3
\\
1/2 & 3/2 & 1 & 1/2
\\
1/8 & 1/2 & 1/2 & 1
\end{pmatrix}.
\end{equation*}

With these powers, we obtain the generating matrix in the form
\begin{equation*}
(2^{-1}\bm{A}\oplus3^{-1}\bm{B})^{\ast}
=
\begin{pmatrix}
1 & 6 & 2 & 4
\\
1/6 & 1 & 1/3 & 2/3
\\
1/2 & 3 & 1 & 2
\\
1/4 & 3/2 & 1/2 & 1
\end{pmatrix}.
\end{equation*}

It is easy to verify that all columns of this matrix are collinear to each other. Indeed, multiplications of the first column by $6$, $2$ and $4$ yield the second, third and fourth columns, respectively. Therefore, we can drop all columns except one, say the first, and write the solution, corresponding to the first endpoint of the Pareto frontier, as
\begin{equation*}
\bm{x}_{\mu}
=
\left(
\begin{array}{c}
1
\\
1/6
\\
1/2
\\
1/4
\end{array}
\right)
u,
\qquad
u>0.
\end{equation*}

By setting $u=1$, we have the vector of rates $\bm{x}_{\mu}\approx(1,0.1667,0.5,0.25)^{T}$.

In the similar way, we examine the second endpoint with $\alpha=3$ and $\beta=2$. We consider the matrix 
\begin{equation*}
3^{-1}\bm{A}\oplus2^{-1}\bm{B}
=
\begin{pmatrix}
1/2 & 1 & 2 & 1
\\
1/4 & 1/2 & 1/6 & 1/4
\\
1/8 & 3/2 & 1/2 & 2
\\
1/4 & 1 & 1/8 & 1/2
\end{pmatrix},
\end{equation*}
and calculate its powers to obtain the generating matrix
\begin{equation*}
(3^{-1}\bm{A}\oplus2^{-1}\bm{B})^{\ast}
=
\begin{pmatrix}
1 & 4 & 2 & 4
\\
1/4 & 1 & 1/2 & 1
\\
1/2 & 2 & 1 & 2
\\
1/4 & 1 & 1/2 & 1
\end{pmatrix}.
\end{equation*}

Since all columns in this matrix are collinear, we take the first column, and write the solution as
\begin{equation*}
\bm{x}_{\nu}
=
\begin{pmatrix}
1
\\
1/4
\\
1/2
\\
1/4
\end{pmatrix}
v,
\qquad
v>0.
\end{equation*}

With $v=1$, we have $\bm{x}_{\nu}=(1,0.25,0.5,0.25)^{T}$.

Finally, we assume that $\alpha=\theta$, where $\theta=24^{1/4}$, and note that $\beta=H(\theta)=\theta$. We form the matrix
\begin{equation*}
\theta^{-1}(\bm{A}\oplus\bm{B})
=
\theta^{-1}
\begin{pmatrix}
1 & 3 & 4 & 2
\\
1/2 & 1 & 1/2 & 1/2
\\
1/4 & 3 & 1 & 4
\\
1/2 & 3 & 1/4 & 1
\end{pmatrix},
\end{equation*}
and then find its second and third powers
\begin{equation*}
\theta^{-2}
\begin{pmatrix}
3/2 & 12 & 4 & 16
\\
1/2 & 3/2 & 2 & 2
\\
2 & 12 & 3/2 & 4
\\
3/2 & 3 & 2 & 3/2
\end{pmatrix},
\qquad
\theta^{-3}
\begin{pmatrix}
8 & 48 & 6 & 16
\\
1 & 6 & 2 & 8
\\
6 & 12 & 8 & 6
\\
3/2 & 6 & 6 & 8
\end{pmatrix}.
\end{equation*}

Calculation of the generating matrix yields
\begin{equation*}
(\theta^{-1}(\bm{A}\oplus\bm{B}))^{\ast}
=
\begin{pmatrix}
1 & 2\theta & 4\theta^{-1} & 2\theta^{2}/3
\\
\theta^{-1}/2 & 1 & \theta^{2}/12 & \theta/3
\\
\theta/4 & \theta^{2}/2 & 1 & 4\theta^{-1}
\\
\theta^{2}/16 & 3\theta^{-1} & \theta/4 & 1
\end{pmatrix}.
\end{equation*}

Since all columns are collinear, we take the first column to write the solution corresponding to $\alpha=\beta=\theta$, as 
\begin{equation*}
\bm{x}_{\theta}
=
\begin{pmatrix}
1
\\
\theta^{-1}/2
\\
\theta/4
\\
\theta^{2}/16
\end{pmatrix}
w,
\qquad
\theta 
=
24^{1/4},
\qquad
w>0.
\end{equation*}

If $w=1$, the solution can be represented as $\bm{x}_{\theta}\approx(1,0.2259,0.5533,0.3062)^{T}$.

Let us consider the obtained solutions
\begin{equation*}
x_{\mu}
\approx
\left(
\begin{array}{c}
1
\\
0.1667
\\
0.5
\\
0.25
\end{array}
\right),
\qquad
x_{\theta}
\approx
\left(
\begin{array}{c}
1
\\
0.2259
\\
0.5533
\\
0.3062
\end{array}
\right),
\qquad
x_{\nu}
=
\left(
\begin{array}{c}
1
\\
0.25
\\
0.5
\\
0.25
\end{array}
\right)
\end{equation*}

All of these solutions assign the highest rate to the first alternative. Next come the third and fourth alternatives. Finally, the second is rated lower than or equal to (as an extreme solution) the fourth alternative.
\end{example}

\section{Conclusions}

In this paper, we have developed an analytical solution to a bi-criteria decision problem to rate alternatives on the basis of pairwise comparisons according to two criteria. The problem was first formulated as a bi-objective optimization problem, where the objective functions are defined as the errors of the log-Chebyshev approximation of two symmetrically reciprocal matrices by a reciprocal matrix of unit rank. Then, we represented and solved the bi-objective problem in terms of a general idempotent semifield as a tropical optimization problem.

The solution approach is based on the introduction of two parameters that describe the optimal values of the objective function, and the reduction of the bi-objective problem to a parametrized vector inequality. The conditions for the existence of solutions to the inequality serve to describe the Pareto frontier for the optimization problem, whereas the corresponding solutions of the inequality act as the Pareto-optimal solution. We used this approach to derive a complete solution to the optimization problem in the form, which provides a direct description of both the Pareto frontier and corresponding Pareto-optimal solutions, and involves a polynomial computational complexity. 

We have applied the solution of the optimization problem to solve the bi-criteria decision problem of rating alternative in a compact vector form, which is ready for formal analysis and practical implementation. Examples of solving optimization and decision-making problems were given to illustrate the results obtained.
 
Possible lines of future research can include the application of tropical optimization to solve bi-criteria decision problems, where the criteria have different weights, and multi-criteria decision problems with equal and different weights.

\bibliographystyle{abbrvurl}

\bibliography{Using_tropical_optimization_techniques_in_bi-criteria_decision_problems}

\end{document}